\newtheorem{theorem}{Theorem}[section]
\newtheorem{lemma}[theorem]{Lemma}
\newtheorem{assumption}[theorem]{Assumption}
\newtheorem{alg}[theorem]{Algorithm}
\newtheorem{proposition}[theorem]{Proposition}
\newtheorem{remark}[theorem]{Remark}
\newtheorem{corollary}[theorem]{Corollary}
\numberwithin{equation}{section}
\newcommand{\norm}[1]{\left\Vert#1\right\Vert}
\newcommand{\anorm}{\norm{\,\cdot \,}}
\newcommand{\forma}{( \,\cdot \, , \, \cdot \,)}
\newcommand{\xa}{x^\alpha}
\newcommand{\wa}{w^\alpha}
\newcommand{\grad}{\nabla}
\newcommand{\ran}{\text{Range\,}}
\newcommand{\nulll}{\text{Null\,}}
\DeclareMathOperator{\argmin}{argmin}
\newcommand{\eps}{\varepsilon}
\renewcommand{\div}{\text{div\,}}
\DeclareMathOperator{\spa}{span}
\newcommand{\nr}[1]{\ensuremath{\left\|{#1}\right\|}}
\newcommand{\goto}{\rightarrow}
\newcommand{\tforall}{~\text{ for all }~}
\newcommand{\bigo}{{\mathcal O}}
\newcommand{\f}{\frac}
\DeclareMathOperator{\dd}{\,d}
\DeclareMathOperator{\ceil}{ceil}
\newcommand{\R}{\mathbb{R}}
\newcommand{\C}{\mathbb{C}}
\newcommand{\cA}{\mathcal{A}}
\begin{document}
\title{Anderson acceleration for contractive and noncontractive operators}

\author{Sara Pollock
\thanks{Department of Mathematics,
University of Florida,
Gainesville, FL, 32611; email: s.pollock@ufl.edu; partially supported by NSF DMS 1852876
and 2011519}
\and
Leo G. Rebholz
\thanks{School of Mathematical and Statistical Sciences, Clemson University,
Clemson SC 29634; email: rebholz@clemson.edu; partially supported by NSF DMS 1522191
and 2011490}
}
\maketitle

\begin{abstract}
A one-step analysis of Anderson acceleration with general algorithmic depths 
is presented.  The resulting residual bounds within both contractive and noncontractive
settings reveal the balance between the contributions from the higher and 
lower order terms, which are both dependent on the success of the optimization 
problem solved at each step of the algorithm. 
The new residual bounds show the additional terms introduced by the
extrapolation produce terms that are of a higher order than was previously understood.
In the contractive setting, these bounds sharpen previous convergence 
and acceleration results. The bounds rely on sufficient 
linear independence of the differences between consecutive residuals, rather 
than assumptions on the boundedness of the optimization coefficients,
allowing the introduction of a theoretically sound safeguarding strategy.
Several numerical tests illustrate the analysis primarily in the noncontractive
setting, and demonstrate the use of the method,
the safeguarding strategy, and theory-based guidance on dynamic 
selection of the algorithmic depth, on a p-Laplace equation,
a nonlinear Helmholtz equation,
and the steady Navier-Stokes equations with high Reynolds number in three
spatial dimensions.
{Anderson acceleration, extrapolation, noncontractive operators}
\end{abstract}

\section{Introduction}

Anderson acceleration (AA) is an extrapolation technique which recombines a 
given number of the most recent iterates and update steps in a fixed-point iteration
to improve the convergence properties of the sequence.
The coefficients of the linear combination used in the update are recomputed at each 
iteration by the solution to an optimization problem which determines a least-length 
update step.
The technique was originally introduced in the context of integral equations
in \cite{anderson65}. It has since been used in many applications
over the last decade for various types of flow problems, for instance in
\cite{BKNR19,EPRX19,LWWY12,PRX18};
geometry optimization in \cite{PDZGQL18}; 
electronic structure computations in \cite{FaSa09};
radiation diffusion and nuclear physics in \cite{AJW17,TKSHCP15};
computing nearest correlation matrices in \cite{HS16};
molecular interaction in \cite{SM11};
and on a wide range of nonlinear problems in \cite{WaNi11}, among others.  

In terms of its analysis, AA was shown to be in the class of generalized quasi-Newton
methods in \cite{Eyert96} and \cite{FaSa09}.  In \cite{WaNi11} it was shown that 
in the linear case, the variant of the method related to Type II Broyden methods
is ``essentially equivalent'' to GMRES, while the Type I variant is essentially 
equivalent to Arnoldi. In the remainder, we restrict our attention to the (standard)
Type II variant, and consider its use on the solution of nonlinear problems.
Recently in \cite{BRS18} a nontrivial ({\em cf.} \cite{WaNi11}) mathematical connection 
between AA and classical extrapolation 
algorithms used to accelerate vector sequences, including the (Modified) Minimal 
Polynomial Extrapolation, Topological and Vector Epsilon Algorithms, and Reduced Rank 
Extrapolation algorithms was established 
(see the review paper \cite{SFS87} and the references therein for further discussion on 
the relation between these more classical methods).
Meanwhile, the first mathematical results showing local convergence of AA for contractive
nonlinear operators were developed in \cite{ToKe15} and sharpened in \cite{K18}. 
The first results to prove how AA improves the convergence rate in fixed point iterations
were written by the authors in \cite{PRX18} and \cite{EPRX19}.
The present work improves upon the results of \cite{EPRX19} by further exploiting
the relationship between the optimization coefficients and optimization gain, made 
possible by analyzing the least-squares problem as it is discussed in \cite{FaSa09}
using a QR factorization. 

This paper presents a novel one-step analysis which both sharpens and generalizes
the AA convergence theory developed for contractive operators in \cite{EPRX19}.  
The new one-step estimates hold for fixed-point iterations of contractive operators or
for zero-finding fixed-point iterations based on operators whose Jacobians 
do not degenerate. The latter are of particular importance in the numerical 
approximation of nonlinear partial differential equations (PDEs).  
The presented theory does not guarantee convergence of the sequence of iterations
for noncontractive operators unless the optimization problem is assumed to be
sufficiently successful at each iteration.  However, it succeeds at explaining the
mechanism by
which AA applied to this broad class of noncontractive fixed-point operators often 
does converge, and it provides insight into the design of more robust and efficient 
algorithms, as demonstrated in the practical guidance and in the 
numerical results.

One of the fundamental aspects of the theory that (to the knowledge of the authors) 
has not been exploited in previous investigations for general algorithmic depths, 
is the relation between the
optimization coefficients and the gain from the optimization problem, which, as 
shown here, can 
be understood through a QR factorization. For this reason, the analysis is
restricted to $\R^n$ (trivially extendable to $\C^n$), 
with the norm from the optimization problem induced by an inner product.  
While the analysis and theory extend to more 
general Hilbert space settings, this allows for a clean presentation of the central 
ideas, and it is the most interesting for the solution of systems assembled from the 
discretization of nonlinear PDEs.

The presented bounds significantly sharpen those previously developed for contractive operators 
in two important ways.
First, the dependence on the higher order terms is shown to be
$\bigo\big( \| w_{k}\| ( \| w_k \| + \| w_{k-1} \|  + \ldots + \| w_{k-m} \|) \big)$, 
improving on the $\bigo(\| w_k \|^2) + \bigo(\| w_{k-1} \|^2) 
+ \ldots + \bigo(\| w_{k-m} \|^2)$
bound proven in \cite{EPRX19}, where $w_k$ is the stage-$k$ residual. 
This analysis produces the first residual bound for AA applied to 
nonlinear problems where the most recent residual $\nr{w_k}$ can be factored out 
of the entire bound; previously, the best bounds for the higher order terms involved
only older (often larger) residuals from the history.
Second, the new estimates show that if the solution to the optimization problem
does not produce a linear combination of residuals that is strictly lesser in 
norm than the most recent residual, then there is no contribution from the higher
order terms. The results of the analysis further motivate strategies 
for choosing the AA depth adaptively or dynamically, which is shown to provide a 
significant advantage over constant depths in the numerical tests.

The remainder of the paper is structured as follows.  Section \ref{sec:prelim} states
the algorithm and presents notation that will be used throughout, and
Section \ref{sec:resi-exp} summarizes the residual expansion which is similar to that of
\cite{EPRX19}. In Section \ref{sec:m1}, the new one-step analysis is presented for
algorithmic depth $m=1$, and in Section \ref{sec:genm}, 
the one-step analysis is developed
for $m > 1$.  
In section \ref{sec:practical}, practical guidance is presented on 
dynamic algorithmic depth selection and safeguarding strategies, as motivated
by the developed theory. In Section \ref{sec:numerics}, 
numerical results are presented that both illustrate the theory 
and practical guidance, and demonstrate how
AA can be effectively used to solve a nonlinear Helmholtz equation and the 
3D steady Navier-Stokes equations with Reynolds numbers past the first Hopf bifurcation.
An appendix contains a proof of a technical lemma providing particular bounds on
the entries of the 
inverse of the upper triangular matrix found in the QR decomposition.

\section{Problem setting and preliminaries}\label{sec:prelim}

Consider seeking a fixed point of Fr\'echet differentiable 
operator $g: X \goto X$ for Hilbert space $X \subseteq \R^n$ equipped with inner 
product $\forma$ and induced norm $\anorm$, under the following conditions.
\begin{assumption}\label{assume:g}
Assume $g\in C^1(X)$ has a fixed point $x^\ast$ in $X$, 
and there are positive constants $\kappa_g$ and $\hat \kappa_g$ with 
\begin{enumerate}
\item $\nr{g'(x)z} \le \kappa_g\nr{z}$ for all $x,z\in X$. 
\item $\nr{g'(x)z - g'(y)z} \le \hat \kappa_g \nr{x-y}\nr{z}$ 
for all $x,y,z \in X$.  
\end{enumerate}
\end{assumption}
A particular case of interest is finding a zero of a function $f: X \goto X$, 
where the system of nonlinear equations $f(x) = 0$, comes from the
discretization of a nonlinear PDE. 
Then $f(x) = g(x) - x$, converts between the fixed-point and zero-finding 
problems. Under Assumption \ref{assume:g} it holds that $f$ has a zero 
$x^\ast \in X$, $f \in C^1(X)$, and  
\begin{align}\label{eqn:fpL}
\nr{f'(x)z - f'(y)z} = \nr{(g'(x) - I)z - (g'(y) - I)z} \le \hat \kappa_g \nr{x-y}\nr{z},
\tforall x,y,z \in X.  
\end{align}

The AA algorithm with depth $m$ applied to the fixed-point problem 
$g(x) = x$, reads as follows.

\begin{alg}[Anderson iteration] \label{alg:anderson}
The Anderson acceleration algorithm with depth $m \ge 0$ and damping factors 
$0 < \beta_k \le 1$ 
reads: \\ 
Step 0: Choose $x_0\in X.$\\
Step 1: Find $w_1\in X $ such that $w_1 = g(x_0)-x_0$.  
Set $x_1 = x_0 + w_1$. \\
Step $k+1$: For $k=1,2,3,\ldots$ Set $m_k = \min\{ k, m\}.$\\
\indent [a.] Find $w_{k+1} = g(x_k)-x_k$. \\
\indent [b.] Solve the minimization problem for $\{ \alpha_{j}^{k+1}\}_{k-m_k}^k$
\begin{align}\label{eqn:opt-v0}
\min_{\sum_{j=k-m_k}^{k} \alpha_j^{k+1}  = 1} 
\left\| \sum_{j=k-m_k}^{k} \alpha_j^{k+1} w_{j+1} \right\| .
\end{align}
\indent [c.] For damping factor $0 < \beta_k \le 1$, set
\begin{align}\label{eqn:update-v0}
x_{k+1} =  \sum_{j= k-m_k}^k \alpha_j^{k+1} x_{j}
 + \beta_k \sum_{j= k-m_k}^k \alpha_j^{k+1} w_{j+1}.
\end{align}
\end{alg}

Throughout the remainder, the stage-$k$ differences between iterates and 
terms are defined as 
\begin{align}\label{eqn:update-err}
e_{k}  \coloneqq x_k - x_{k-1}, \quad
w_k  \coloneqq g(x_{k-1}) - x_{k-1}. 
\end{align}

The next assumption allows a key generalization from the previous convergence analysis
frameworks of \cite{EPRX19,PRX18,ToKe15,K18}, which are specific to contractive 
fixed-point operators. As discussed below in Remark \ref{rem:m1}, it is automatically
satisfied at each iteration for contractive fixed-point operators, and may be locally 
satisfied for finding zeros of nondegenerate functions.
\begin{assumption}\label{a:key} The stage-$j$ iterates and residuals satisfy the
relationship
\begin{align}\label{eqn:m1key}
\nr{w_{j+1}-w_j} \ge \sigma \nr{e_j}.
\end{align}
\end{assumption}
\begin{remark}\label{rem:m1}
Assumption \ref{a:key} is reasonable to require as it is satisfied (not necessarily 
exhaustively) under the two following important settings.
\begin{enumerate}
\item If $g$ is a contractive operator then its Lipschitz constant given by
Assumption \ref{assume:g} satisfies $\kappa_g < 1$, and by the triangle inequality
$\nr{w_{j+1} - w_j} 
\ge \nr{x_j - x_{j-1}} - \nr{g(x_j) - g(x_{j-1})} 
\ge (1-\kappa_g) \nr{e_j}.$
Then \eqref{eqn:m1key} is always satisfied with $\sigma = (1-\kappa_g)$.  
\item
In terms of seeking a zero of a function $f$ as the fixed point
of $g(x) = f(x) + x$, the nonlinear residual is $w_{j+1} = g(x_j) - x_j = f(x_j)$.
Assumption \ref{a:key} is then satisfied locally if the smallest singular value of the 
Jacobian $f'$ is uniformly bounded away from zero on $X$, and $\nr{x_{j}-x_{j-1}}$ is
small enough. Specifically, if for each 
$x,y \in X$ it holds that $\nr{f'(x)y} \ge \sigma_f\nr{y},$ for some $\sigma_f > 0$.  
This is similar to the usual assumption for Newton iterations that the Jacobian is
nondegenerate at a solution, and could be localized to the vicinity of a solution 
without undue complication.
Then, under Assumptions \ref{assume:g}, and in accordance with
\eqref{eqn:fpL}, it holds that 
\begin{align*}
\nr{f(x) - f(y)} &=  \nr{f'(y)(x-y) + \int_0^1 (f'(y + t(x-y)) - f'(y))(x-y) \dd t}
\\
& \ge \sigma_f\nr{x-y} - \f{\kappa_g}{2}\nr{x-y}^2
\\
& \ge \f{\sigma_f}{2}\nr{x-y}, ~\text{ for } \nr{x-y} \le \f{\sigma_f}{\hat \kappa_g}.
\end{align*}
Then for $\nr{e_j} \le {\sigma_f}/{\hat \kappa_g}$ it holds that
$\nr{w_{j+1} - w_j} \ge \f{\sigma_f}{2}\nr{e_j}$,
which satisfies \eqref{eqn:m1key} with $\sigma = \sigma_f/2$.  
\end{enumerate}
\end{remark}

Define the following averages given by the solution 
$\alpha^{k+1} = \{\alpha_j^{k+1}\}_{j = k-m_k}^k$ 
to the optimization problem
\eqref{eqn:opt-v0} by 
\begin{align}\label{eqn:averages}
x_k^\alpha = \sum_{j = k-m_k}^k \alpha^{k+1}_j x_j, \quad
	w_{k+1}^\alpha = \sum_{j = k-m_k}^k \alpha^{k+1}_j w_{j+1}.
\end{align}
Then the update \eqref{eqn:update-v0} can be written in terms of the averages
$\xa_k$ and $\wa_{k+1}$, by
\begin{align}\label{eqn:update-v1}
x_{k+1} = \xa_k + \beta_k \wa_{k+1}.
\end{align}
The stage-$k$ gain $\theta_k$ which quantifies the success of the optimization
problem is defined by
\begin{align}\label{eqn:thetak}
\nr{w_{k}^\alpha} = \theta_k \nr{w_{k}}. 
\end{align}
This important quantity is shown in \cite{EPRX19} to scale the first-order term in the 
residual expansion (also shown below).
Up to that scaling, this term is the residual in the standard fixed-point iteration.  
The higher-order terms on the other hand are shown below to be scaled by a 
factor of $\sqrt{1 - \theta_k^2}$, meaning a successful optimization increases the
relative weight of the higher-order terms, 
and an unsuccessful optimization increases the
relative weight of the first-order term in the residual expansion.

The constrained optimization problem \eqref{eqn:opt-v0} is often useful for
analysis of the method (see, {\em e.g.,} \cite{EPRX19, K18, PRX18, ToKe15}).  
In the current view however the following unconstrained form of the optimization 
problem \eqref{eqn:opt-v0} which is more
easily implemented in practice is also more convenient for the analysis.

Define the matrices $E_k$ and $F_{k}$ formed by the respective
differences between consecutive iterates and residuals by
\begin{align}\label{eqn:EkFkdef}
E_k &\coloneqq \left(\begin{array}{cccccc} 
e_k & e_{k-1}& \cdots & e_{k-m_k+1}
\end{array}\right),
\nonumber \\
F_k &\coloneqq \left(\begin{array}{cccccc} 
(w_{k+1}- w_k)& (w_k - w_{k-1}) & \ldots & (w_{k-m_k+2}-w_{k-m_k+1})
\end{array}\right).
\end{align}
Then \eqref{eqn:opt-v0} is equivalent to the unconstrained minimization problem
\begin{align}\label{eqn:opt-v1}
\gamma^{k+1}= \argmin_{\gamma\in \R^m} \nr{w_{k+1}- F_k \gamma}, ~\text{ for } 
\gamma^{k+1} = (\gamma^{k+1}_k, \gamma^{k+1}_{k-1},\ldots,\gamma^{k+1}_{k-m_k+1})^\top.
\end{align}
The averages $\xa_k$ and $\wa_{k+1}$ used in the update \eqref{eqn:update-v1},
and the transformation between the two sets of optimization coefficients are
related by 
\begin{align}\label{eqn:gammadef}
\xa_k = x_{k} - E_k \gamma^{k+1}, \quad
\wa_{k+1} = w_{k+1} - F_k \gamma^{k+1}, \quad
\gamma_j^{k+1} = \sum_{n = k-m_k}^{j-1} \alpha^{k+1}_n.
\end{align} 
This form of the optimization problem is instrumental in the analysis of
\cite{EPRX19}, and its direct use in the practical implementation of Algorithm 
\ref{alg:anderson} is carefully discussed in \cite{FaSa09,WaNi11}.

As commonly understood, the algorithm in its most general form 
does not identify the norm that should be used in the optimization.
The minimization problem is usually taken in the $l_2$ 
(or weighted $l_2$) sense, whereby the least-squares problem can be solved efficiently 
by a (fast) QR method (see \cite{ToKe15} for a discussion on minimizing in $l_1$ or
$l_\infty$).  Throughout the rest of this manuscript, the optimization problem 
\eqref{eqn:opt-v1} is considered the norm $\anorm$ induced by inner product $\forma$, 
which then falls under the least-squares setting. 
For example in \cite{PRX18}, the optimization is done in the $H_0^1$ sense
as the nonlinear operator there is contractive in $H_0^1$; this 
is interpreted (and implemented) as a least-squares optimization 
of a (discrete) gradient.  

The QR decomposition of $F_k$ 
will be explicitly used in the analysis to extract relations
between the optimization gain $\theta_{k}$ and optimization coefficients 
$\gamma^{k}$. A key repercussion of this  approach is that assumptions on the 
boundedness of the optimization coefficients as used in 
\cite{EPRX19,PRX18}, and \cite{K18,ToKe15} for $m > 1$, are replaced 
by assumptions on the sufficient linear independence between columns of $F_k$.  
As discussed in Subsection \ref{sec:practical}, satisfaction of these
assumptions can be easily verified and even enforced during the course of a numerical
simulation.
\section{Expansion of the residual}\label{sec:resi-exp} 
This section is summarized from \cite{EPRX19} and included here both to
make the paper more self-contained and to introduce a consistent notation.  
The novelty in the current paper is how the differences between consecutive 
iterates are bounded in terms of the nonlinear residuals under more general
assumptions than contractiveness of the underlying fixed-point operator; and,
without explicit assumptions on the boundedness of the optimization coefficients. 
The results of Sections \ref{sec:m1} and \ref{sec:genm} are applied to the residual 
expansion of this section to obtain the main results.

Starting with the definition of the residual by \eqref{eqn:update-err} and 
expanding the iterate $x_k$ by the update \eqref{eqn:update-v1}, the 
nonlinear residual $w_{k+1}$ can be written as 
\begin{align}\label{eqn:wk001}
w_{k+1} = g(x_k) - x_k 
 = (g(x_k) - \xa_{k-1}) - \beta_{k-1}\wa_k. 
\end{align} 
The first term on the right-hand side of \eqref{eqn:wk001} can be expanded by 
\eqref{eqn:averages}.
Consistent with \eqref{eqn:gammadef}, the optimization coefficients 
$\alpha_j^k$ are collected into the coefficients $\gamma_j^k$ by
$\gamma_{j}^k \coloneqq \sum_{n = k-m_{k-1}-1}^{j-1} \alpha^k_n.$ Then
\begin{align}\label{eqn:wk002}
g(x_k)& - \xa_{k-1}
 = \sum_{j = k-m_{k-1}-1}^{k-1} \alpha^k_j (g(x_k) - x_j)
\nonumber \\
& = \!\! \sum_{j = k-m_{k-1}-1}^{k-1} \alpha^k_j (g(x_j) - x_j)
+ \!\!\!\! \sum_{j = k-m_{k-1}}^{k} \left( \sum_{n = k-m_{k-1}-1}^{j-1}\alpha^k_n\right) 
(g(x_j) - g(x_{j-1}))
\nonumber \\
& = w_k^\alpha  
+ \sum_{j = k-m_{k-1}}^{k} \gamma^k_j (g(x_j) - g(x_{j-1})). 
\end{align}
This equality shows the approximation to the fixed-point $g(x_k)$ is decomposed into 
the average of the previous iterates $\xa_{k-1}$, the average over previous 
updates $\wa_k$ corresponding to the optimization
problem from the last step, and a weighted sum over the differences of consecutive
approximations.
Due to Assumption \ref{assume:g}, each term $g(x_j) - g(x_{j-1})$ has a Taylor expansion
$g(x_{j}) - g(x_{j-1}) = \int_0^1 g'(z_j(t)) e_j \dd t$,
where $z_{j}(t) =  x_{j-1}+te_j$.
Rewriting \eqref{eqn:wk001} with \eqref{eqn:wk002} with this expansion yields
\begin{align}\label{eqn:wk004}
w_{k+1}  
  = (1-\beta_{k-1}) w_k^\alpha +
  \sum_{j = k-m_{k-1}}^{k} \gamma^k_{j} \int_0^1 g'(z_j(t)) e_j \dd t.
\end{align}
Adding and subtracting consecutive averages, each summand of the last term of 
\eqref{eqn:wk004} can be written as
\begin{align}\label{eqn:wk004int1}
\int_0^1 g'(z_j(t))e_j \dd t = 
\int_0^1 g'(z_k(t))e_j \dd t + 
\sum_{n=j}^{k-1} \int_0^1 g'(z_n(t))e_j - g'(z_{n+1}(t)) e_j \dd t.
\end{align}
Summing over the $j$'s,
the sum on the right hand side of \eqref{eqn:wk004} may be expressed as
\begin{align}\label{eqn:wk005}
\sum_{j = k-m_{k-1}}^{k} \!\! \gamma^k_{j} \int_0^1 g'(z_j(t))e_j \dd t
&=\int_0^1 g'(z_k(t)) \sum_{j = k-m_{k-1}}^k  \!\! \gamma^k_j e_j \dd t 
\nonumber \\&
+ \sum_{j = k-m_{k-1}}^{k-1} 
  \sum_{n=j}^{k-1} \gamma^k_j \int_0^1 g'(z_n(t))e_j- g'(z_{n+1}(t)) e_j \dd t. 
\end{align}
From $\sum_{j = k-m_{k-1}}^k \gamma_j^k e_j = x_k - \xa_{k-1}$ 
(see \cite[Section 2]{EPRX19} for details) and \eqref{eqn:update-v1} 
it holds that
\begin{align}\label{eqn:wk007}
\sum_{j=k-m_{k-1}}^k \gamma^k_{j}e_j = 
x_k - \xa_{k-1} = \beta_{k-1} \wa_k.
\end{align}
Putting \eqref{eqn:wk007} together with \eqref{eqn:wk005} and \eqref{eqn:wk004}
then yields
\begin{align}\label{eqn:wk008}
w_{k+1} = \int_0^1 & (1-\beta_{k-1})\wa_k + \beta_{k-1} g'(z_k(t)) \wa_k  \dd t
+ \sum_{j = k-m_{k-1}}^{k-1}  
  \sum_{n=j}^{k-1} \int_0^1 \big(g'(z_n(t)) - g'(z_{n+1}(t)) \big) 
e_j \gamma_j^k \dd t.
\end{align}
Notice that the summands on the right of \eqref{eqn:wk008} are all zero
if $g$ is a linear operator, as $g'$ is then constant.  
The terms summed over are next bounded using
Assumption \ref{assume:g}.  It is worth noting here that for linear operators, 
this will result in zero contribution from higher-order terms, whereas for nonlinear
operators, the higher-order terms are scaled by $\hat \kappa_g>0$, the Lipschitz 
constant of $g'$.  Intuitively, this expansion leads to a local result, as when 
the difference between iterates is sufficiently small, the graph of a function 
satisfying Assumption \ref{assume:g} at or between those iterates is nearly linear.

Taking norms in \eqref{eqn:wk008} and applying Assumption \ref{assume:g} then
triangle inequalities applied to the terms of the final sum produces the expansion of 
$\nr{w_{k+1}}$ in terms of $\nr{\wa_k}$ and $\nr{e_j}$, $j = k-m_k, \ldots, k$, by
\begin{align}\label{eqn:wk009}
\nr{w_{k+1}} &\le \left((1-\beta_{k-1}) + \kappa_g \beta_{k-1}\right) \nr{\wa_k} 
+ \f{\hat \kappa_g}{2} \sum_{j = k-m_{k-1}}^{k-1} \!\! \nr{e_j \gamma^k_j}
  \sum_{n=j}^{k-1} \left( \nr {e_{n+1}} + \nr{e_{n}} \right)
\nonumber \\
&= \left((1-\beta_{k-1}) + \kappa_g \beta_{k-1}\right) \nr{\wa_k} 
+ \f{\hat \kappa_g}{2} 
  \sum_{n=k-m_{k-1}}^{k-1}  \!\!\!\!  \left( \nr {e_{n+1}} + \nr{e_{n}} \right)
\sum_{j = n}^{k-1} \nr{e_j \gamma^k_j},
\end{align}
where the last equality follows from reindexing the sums.
The next step is to bound the $\nr{e_j}$ terms by $\nr{w_j}$ terms.
Here the analysis departs from that in \cite{EPRX19}.
This will be shown first in the simpler case of depth $m=1$ in Section \ref{sec:m1}, and then extended to more
general depths $m > 1$ in Section \ref{sec:genm}.
\section{Acceleration for depth $m=1$}\label{sec:m1}
For depth $m=1$, the matrix $F_k$ has only one column, which removes several 
technicalities from the analysis.  
It is useful to use this case to overview the general
framework and to introduce the extension to a noncontractive setting.
\begin{lemma}\label{lem:m1}
Let Assumption \ref{assume:g} hold, and let $m=1$ in Algorithm \ref{alg:anderson}.  
Assume there is a constant $\sigma > 0$ for which the residuals on stages 
$j+1$ and $j$ satsify Assumption \ref{a:key}.
Then the following bound holds on the difference between consecutive 
accelerated iterates.
\begin{align}\label{eqn:m1-lem}
\nr{e_{j+1}} \le
\nr{w_{j+1}}\left( \sigma^{-1} \sqrt{1-\theta_{j+1}^2} + \beta_j \theta_{j+1} \right).
\end{align}
\end{lemma}

\begin{proof}
The update \eqref{eqn:update-v1} for the case $m=1$ is
\begin{align}\label{eqn:m1-001}
x_{j+1} = (1-\gamma_j^{j+1}) x_j + \gamma_j^{j+1}(x_{j-1}) + \beta_j \wa_{j+1},
\end{align}
where consistent with \eqref{eqn:gammadef}, $\gamma_j^{j+1} = \alpha^{j+1}_{j-1}$.  
Taking norms and applying \eqref{eqn:thetak}
allows
\begin{align}\label{eqn:m1-002}
\nr{e_{j+1}} = \nr{x_{j+1} -x_j} 
 & \le  |\gamma_j^{j+1}| \nr{e_j} + \beta_j \theta_{j+1} \nr{w_{j+1}}.
\end{align}
Inequality \eqref{eqn:m1-002} will be used to trade terms of the form $\nr{e_{j+1}}$
for expressions in terms of $\nr{w_{j+1}}$.
The argument follows by relating the
optimization coefficient $\gamma_j^{j+1}$ to the optimization gain $\theta_{j+1}$.

For $m=1$, the coefficient $\gamma^{j+1}_j$ can be explicitly written as
\begin{align}\label{eqn:m1-002g}
\gamma^{j+1}_j = \f{(w_{j+1},w_{j+1} - w_j)}{\nr{w_{j+1} - w_{j}}^2}.
\end{align}
In particular, this determines the decomposition of $w_{j+1}$ into 
$w_R = \gamma^{j+1}_j (w_{j+1}-w_j)$, in the range of $(w_{j+1}-w_j)$, and 
$w_N = \wa_{j+1} = w_{j+1} - \gamma^{j+1}_j (w_{j+1} - w_j)$, in the nullspace of
$(w_{j+1}-w_j)^\top$.  By the orthogonality of $w_R$ and $w_N$ it follows that
\begin{align}\label{eqn:m1-002RN}
\nr{w_{j+1}}^2 = \nr{w_R}^2 + \nr{w_N}^2 
= \nr{\gamma^{j+1}_j (w_{j+1}- w_j)}^2 + \nr{\wa_{j+1}}^2 
= (\gamma^{j+1}_j)^2 \nr{w_{j+1}-w_j}^2 + \theta_{j+1}^2\nr{w_{j+1}}^2, 
\end{align}
by which
\begin{align}\label{eqn:gamma_theta}
|\gamma_j^{j+1}| 
  = \sqrt{1-\theta_{j+1}^2}\f{\nr{w_{j+1}}}{\nr{w_{j+1}-w_j}},
~\text{ and }~
\theta_{j+1} 
= \sqrt{1 - \f{(w_{j+1}, w_{j+1}-w_j)^2}{\nr{w_{j+1}}^2\nr{w_{j+1 -w_j}}^2}},
\end{align}
where the expression for $\theta_{j+1}$ in \eqref{eqn:gamma_theta} can be recognized
as the (absolute value of the) direction sine between $w_{j+1}$ and $w_{j+1}-w_j$.
Applying the expression for $\gamma^{j+1}_j$ in 
\eqref{eqn:gamma_theta} to \eqref{eqn:m1-002} yields
\begin{align}\label{eqn:m1-003}
\nr{e_{j+1}} \le  \sqrt{1-\theta_{j+1}^2}\f{\nr{w_{j+1}}}{\nr{w_{j+1}-w_j}} 
                   \nr{e_j} + \beta_j \theta_{j+1} \nr{w_{j+1}}.
\end{align}

Applying now they key inequality \eqref{eqn:m1key} to \eqref{eqn:m1-003} yields
\begin{align}\label{eqn:m1-004}
\nr{e_{j+1}} \le  \sigma^{-1} \sqrt{1-\theta_{j+1}^2} \nr{w_{j+1}}
               + \beta_j \theta_{j+1} \nr{w_{j+1}},
\end{align}
establishing the result \eqref{eqn:m1-lem}
\end{proof}

\begin{remark}\label{rem:m1Jposdef}
In the second case of Remark \ref{rem:m1} where $f'$ is nondegenerate, 
the results of Lemma \ref{lem:m1} and $0 < \beta_j \le 1$ show
\[
\nr{e_{j+1}} \le \left( \f{2}{\sigma_f}\sqrt{1-\theta_{j+1}^2} + \theta_{j+1}\right)
\nr{w_{j+1}}
\le \sqrt{1 + 4/\sigma_f^2} \nr{w_{j+1}},
\]
where the last bound was obtained by maximizing the previous expression with respect
to $\theta_{j+1}$. Setting this expression no greater than $\sigma_f/\hat \kappa_g$
it follows that
$\nr{w_{j+1}} \le \sigma_f^2/\big(\hat \kappa_g \sqrt{\sigma_f^2 + 4} \big)$
is sufficient to ensure $\nr{e_{j+1}} \le \sigma_f/\hat \kappa_g$, which implies
satisfaction of Assumption \ref{a:key} on the subsequent iteration.
\end{remark}

Relation \eqref{eqn:m1-004} is now used in the expansion of the 
residual \eqref{eqn:wk009} to bound $\nr{w_{k+1}}$.

\begin{theorem}\label{thm:m1}
Suppose the hypotheses of Lemma \ref{lem:m1} for $j = k-1$ and $j = k-2$. 
Then the following bound holds for the nonlinear residual $\nr{w_{k+1}}$
generated by Algorithm \ref{alg:anderson} with depth $m=1$.
\begin{align}\label{eqn:m1-thm}
&\nr{w_{k+1}}  \le
 \nr{w_k} \bigg\{
\theta_k \big((1-\beta_{k-1}) + \kappa_g \beta_{k-1} \big)
+ \hat \kappa_g \sigma^{-1} \sqrt{1-\theta_{k}^2}
\nonumber \\ & 
  \times\bigg( \nr{w_k}
  \Big(\sigma^{-1} \sqrt{1-\theta_{k}^2} + \beta_{k-1}\theta_{k} \Big)
  +\nr{w_{k-1}}
  \Big(\sigma^{-1} \sqrt{1-\theta_{k-1}^2} + \beta_{k-2}\theta_{k-1} \Big) 
  \bigg)  \bigg\}.
\end{align}
\end{theorem}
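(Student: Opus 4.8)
The plan is to combine the residual expansion \eqref{eqn:wk009} for $m=1$ with the bound on $\nr{e_{j+1}}$ from Lemma \ref{lem:m1}. First I would specialize \eqref{eqn:wk009} to the case $m=1$, so that $m_{k-1}=1$ and the double sum collapses dramatically: the first term on the right becomes $\big((1-\beta_{k-1})+\kappa_g\beta_{k-1}\big)\nr{\wa_k}$, and since $\wa_k = w_k^\alpha$ with $\nr{w_k^\alpha}=\theta_k\nr{w_k}$ by \eqref{eqn:thetak}, this is exactly $\theta_k\big((1-\beta_{k-1})+\kappa_g\beta_{k-1}\big)\nr{w_k}$, which is the leading term in \eqref{eqn:m1-thm}. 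For the second (higher-order) term, with $m_{k-1}=1$ the index $j$ in \eqref{eqn:wk009} runs only over $j=k-1$, so the sum over $n$ from $j$ to $k-1$ is a single term $n=k-1$, and $\f{\hat\kappa_g}{2}(\nr{e_k}+\nr{e_{k-1}})\nr{e_{k-1}\gamma_{k-1}^k}$ is the whole contribution.

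Next I would handle the factor $\nr{e_{k-1}\gamma_{k-1}^k} = |\gamma_{k-1}^k|\nr{e_{k-1}}$. Using the $m=1$ identity \eqref{eqn:gamma_theta} (shifted to stage $k$), $|\gamma_{k-1}^k| = \sqrt{1-\theta_k^2}\,\nr{w_k}/\nr{w_k-w_{k-1}}$, and then the key hypothesis \eqref{eqn:m1key} at $j=k-1$ gives $\nr{w_k-w_{k-1}}\ge\sigma\nr{e_{k-1}}$, so $|\gamma_{k-1}^k|\nr{e_{k-1}} \le \sigma^{-1}\sqrt{1-\theta_k^2}\,\nr{w_k}$. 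This is precisely the prefactor $\hat\kappa_g\sigma^{-1}\sqrt{1-\theta_k^2}$ times $\nr{w_k}$ that appears outside the parenthesis in \eqref{eqn:m1-thm} (absorbing the $\tfrac12$ into the fact that $\nr{e_k}+\nr{e_{k-1}}$ is bounded by the sum, not the average — I'd need to be slightly careful and note that the $\tfrac12$ cancels against bounding $\nr{e_k}+\nr{e_{k-1}}$ by twice nothing; more precisely the $\tfrac12$ stays and the two summands $\nr{e_k},\nr{e_{k-1}}$ are each bounded separately, and reading \eqref{eqn:m1-thm} the factor is $\hat\kappa_g$, not $\hat\kappa_g/2$, so the $\tfrac12$ must be getting consumed by writing $\tfrac12(\nr{e_k}+\nr{e_{k-1}})\le \tfrac12(\cdots)+\tfrac12(\cdots)$ — I would recheck this bookkeeping against the exact constants).

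Then I would substitute the Lemma \ref{lem:m1} bounds for $\nr{e_k}$ and $\nr{e_{k-1}}$ themselves: applying \eqref{eqn:m1-lem} at $j=k-1$ gives $\nr{e_k}\le\nr{w_k}\big(\sigma^{-1}\sqrt{1-\theta_k^2}+\beta_{k-1}\theta_k\big)$, and at $j=k-2$ gives $\nr{e_{k-1}}\le\nr{w_{k-1}}\big(\sigma^{-1}\sqrt{1-\theta_{k-1}^2}+\beta_{k-2}\theta_{k-1}\big)$. Note this is exactly why the theorem assumes the hypotheses of Lemma \ref{lem:m1} at \emph{both} $j=k-1$ and $j=k-2$. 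Plugging these into $\nr{e_k}+\nr{e_{k-1}}$ inside the parenthesis of \eqref{eqn:m1-thm} and collecting everything yields the stated estimate.

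The main obstacle I anticipate is purely the constant and index bookkeeping in the higher-order term — making sure the $\f{\hat\kappa_g}{2}$ and the pair $(\nr{e_k}+\nr{e_{k-1}})$ combine into the exact form written in \eqref{eqn:m1-thm}, and that the $m=1$ collapse of the double sum in \eqref{eqn:wk009} is done with the correct index ranges (confirming $m_{k-1}=1$ forces $j=n=k-1$ as the only surviving term). There is no deep inequality to discover here; the real content was already packaged into Lemma \ref{lem:m1} and the expansion \eqref{eqn:wk009}, so the proof is a careful assembly rather than a new idea.
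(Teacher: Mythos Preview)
Your approach is essentially identical to the paper's: specialize \eqref{eqn:wk009} to $m_{k-1}=1$, bound $|\gamma_{k-1}^k|\nr{e_{k-1}}$ via \eqref{eqn:gamma_theta} and \eqref{eqn:m1key}, and insert the Lemma~\ref{lem:m1} bounds for $\nr{e_k}$ and $\nr{e_{k-1}}$. Regarding the $\tfrac{1}{2}$ that puzzles you: the paper's proof simply writes the specialized expansion with $\hat\kappa_g$ rather than $\hat\kappa_g/2$ from the outset, so the stated theorem is a factor of two weaker than what \eqref{eqn:wk009} actually gives --- the $\tfrac{1}{2}$ is not consumed by any clever cancellation, it is just dropped.
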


\begin{remark}\label{rem:kg1}
Since $\hat \kappa_g$ represents the Lipschitz constant of $g'$, if $g$ is linear then 
$\hat \kappa_g=0$ and thus all of the higher order terms on the right side of 
\eqref{eqn:m1-thm} will vanish.
\end{remark}

This result shows not only how the first order term is scaled by the optimization
gain $\theta_k$, but also that the higher order terms are scaled by 
$\sqrt{1 - \theta_k^2}$. This explicitly establishes that if $\theta_k = 1$, 
then the higher order terms do not contribute to the total residual and the bound
for the fixed-point iteration is recovered. This holds as well for the case $m>1$, 
shown in the next section.

\begin{proof}
Expanding the residual by \eqref{eqn:wk009} yields for depth $m=1$
\begin{align*}
\nr{w_{k+1}} & \le
\theta_k \big((1-\beta_{k-1}) + \kappa_g \beta_{k-1} \big) \nr{w_k} 
+ \hat \kappa_g \left( \nr{e_k} + \nr{e_{k-1}} \right)|\gamma_{k-1}^k|\nr{e_{k-1}},
\end{align*}
where consistent with \eqref{eqn:gammadef}, $\gamma_{k-1}^k = \alpha^k_{k-2}$.

Applying \eqref{eqn:m1-004} with $j= k-1$ and $j=k-2$
allows
\begin{align}\label{eqn:m1-006}
\nr{w_{k+1}} & \le
\theta_k \big((1-\beta_{k-1}) + \kappa_g \beta_{k-1} \big) \nr{w_k} 
+ \hat \kappa_g \bigg( \nr{w_k}
  \Big(\sigma^{-1} \sqrt{1-\theta_{k}^2} + \beta_{k-1}\theta_{k} \Big)
\nonumber \\
  & +\nr{w_{k-1}}
  \left(\sigma^{-1} \sqrt{1-\theta_{k-1}^2} + \beta_{k-2}\theta_{k-1} \right) 
  \bigg)|\gamma_{k-1}^k|\nr{e_{k-1}}.
\end{align}

Combining relation \eqref{eqn:gamma_theta} with hypothesis \eqref{eqn:m1key}
yields
$
|\gamma_{k-1}^k|\nr{e_{k-1}} \le \sigma^{-1}\sqrt{1-\theta_{k}^2} \nr{w_k},
$
by which 
\begin{align}\label{eqn:m1-007}
\nr{w_{k+1}} & \le
\theta_k \big((1-\beta_{k-1}) + \kappa_g \beta_{k-1} \big) \nr{w_k} 
+ \hat \kappa_g \bigg( \nr{w_k}
  \Big(\sigma^{-1} \sqrt{1-\theta_{k}^2} + \beta_{k-1}\theta_{k} \Big)
\nonumber \\
  & +\nr{w_{k-1}}
  \Big(\sigma^{-1} \sqrt{1-\theta_{k-1}^2} + \beta_{k-2}\theta_{k-1} \Big) 
  \bigg)\sigma^{-1} \sqrt{1-\theta_{k}^2} \nr{w_k},
\end{align}
establishing the result \eqref{eqn:m1-thm}.
\end{proof}

The bound \eqref{eqn:m1-thm} shows for $\theta_{k}$ small, the higher-order
terms have a greater contribution whereas for $\theta_{k}$ close to unity
(the optimization did little), the residual is dominated by the first order term; 
and, $\hat \kappa_g$, the Lipschitz constant of $g'$, has less influence on the 
the residual.

In light of Remark \ref{rem:m1}, the two presented conditions
under which the hypothesis \eqref{eqn:m1key} must hold are now discussed.
First, if $g$ is contractive on $X$, then \eqref{eqn:m1key} continues to hold on 
subsequent iterates without further conditions. Moreover in that case it makes
sense to run the iteration without damping ($\beta_j = 1$ for all $j$)
and \eqref{eqn:m1-007} reduces to
\begin{align*}
\nr{w_{k+1}} & \le
 \nr{w_k} \Bigg\{ \theta_k \kappa_g 
+ \f{\hat \kappa_g \sqrt{1-\theta_{k}^2}}{(1-\kappa_g)^2}
\Bigg(\nr{w_k} \Bigg({\sqrt{1-\theta_{k}^2}} + \theta_{k} \Bigg)
+\nr{w_{k-1}}
  \Bigg(\sqrt{1-\theta_{k-1}^2} + \theta_{k-1} \Bigg) 
  \Bigg)  \Bigg\}.
\end{align*}

If instead, 
$\nr{f'(y)(x-y)} \ge \sigma_f\nr{x-y}$ for all $x,y \in X$,
then at the next iteration 
$\nr{w_{k+1}- w_k} \ge (\sigma_f/2)\nr{e_k}$ continues to hold if 
$\nr{e_{k+1}} \le \nr{e_k}$, which is guaranteed upon sufficient decrease of the 
sequence of residuals $\{\nr{w_k}\}$.
This explains the observation (demonstrated by the steady examples of \cite{LWWY12}, 
for instance) that Anderson accelerated noncontractive iterations can show rapid 
convergence.  However, this does not guarantee convergence without some ability
to enforce an inequality such as
$\theta_k((1-\beta_{k-1}) + \kappa_g \beta_{k-1}) < 1-\eps$, with sufficient frequency. 
As $\theta_k$ sufficiently less than one is essential to the success
of the algorithm, this encourages the consideration of the theory for $m >1$
in the next sections, as smaller gain factors can be obtained (to some extent)
with greater algorithmic depth.

Finally, a corollary to \eqref{thm:m1} shows a simplified residual bound for
contractive operators together with a condition for monotonic decrease of the residual.
This result features tighter bounds on the
higher order terms than in \cite{EPRX19}, and without assumptions on the
boundedness of the optimization coefficients.
\begin{corollary}\label{cor:m1con}
Suppose the hypotheses of Lemma \ref{lem:m1} for $j = k-1$ and $j = k-2$, and
the Lipschitz constant of $g$ satisfies $\kappa_g < 1$.
Then the following bound holds on the nonlinear residual $\nr{w_{k+1}}$ generated by
Algorithm \ref{alg:anderson} with $m=1$ and $\beta_k = \beta = 1$:
\begin{align}\label{eqn:m1conb}
&\nr{w_{k+1}}  \le
 \nr{w_k} \bigg\{
\theta_k \kappa_g 
+ \f{ \sqrt 2 \hat \kappa_g}{(1-\kappa_g)^2} \sqrt{1-\theta_{k}^2}
  \bigg( \nr{w_k} +\nr{w_{k-1}}
  \bigg)  \bigg\}.
\end{align}

After the first two consecutive iterations $j = k-1,k$ where
the following inequality is satisfied
\begin{align}\label{eqn:m1con}
\nr{w_{j}}+\nr{w_{j-1}} < \f{\sqrt{1-\kappa_g^2}(1-\kappa_g)^2}{\sqrt 2 \hat \kappa_g},
\end{align}
monotonic decrease of the residual is ensured.
\end{corollary}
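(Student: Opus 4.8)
The plan is to derive \eqref{eqn:m1conb} from Theorem \ref{thm:m1} by specializing to the contractive case, and then to read off the monotonicity condition \eqref{eqn:m1con} from the resulting bound. First I would set $\beta_k = \beta = 1$ in \eqref{eqn:m1-thm}: the first-order factor $(1-\beta_{k-1}) + \kappa_g\beta_{k-1}$ collapses to $\kappa_g$, and each damping-weighted term $\sigma^{-1}\sqrt{1-\theta_j^2} + \beta\theta_j$ becomes $\sigma^{-1}\sqrt{1-\theta_j^2} + \theta_j$. Next, since $g$ is contractive, Remark \ref{rem:m1}(1) gives $\sigma = 1-\kappa_g$, so $\sigma^{-1} = 1/(1-\kappa_g) \ge 1$ (as $0<\kappa_g<1$), whence $\sigma^{-1}\sqrt{1-\theta_j^2} + \theta_j \le \sigma^{-1}(\sqrt{1-\theta_j^2} + \theta_j) \le \sqrt{2}\,\sigma^{-1}$, using $\sqrt{1-t^2}+t \le \sqrt{2}$ for $t\in[0,1]$. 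Substituting these into \eqref{eqn:m1-thm} and pulling out the common $\hat\kappa_g\sigma^{-1}\sqrt{1-\theta_k^2} = \hat\kappa_g\sqrt{1-\theta_k^2}/(1-\kappa_g)$, the bracketed higher-order factor becomes $\sqrt{2}\,\sigma^{-1}(\nr{w_k} + \nr{w_{k-1}}) = \frac{\sqrt{2}}{1-\kappa_g}(\nr{w_k}+\nr{w_{k-1}})$, which combines with the prefactor to give $\frac{\sqrt{2}\,\hat\kappa_g}{(1-\kappa_g)^2}\sqrt{1-\theta_k^2}(\nr{w_k}+\nr{w_{k-1}})$, matching \eqref{eqn:m1conb}.

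For the monotonicity claim, I would argue that $\nr{w_{k+1}} < \nr{w_k}$ follows once the brace in \eqref{eqn:m1conb} is strictly less than $1$. I would bound the brace uniformly in $\theta_k\in[0,1]$: writing $\theta = \theta_k$ and $S = \nr{w_k}+\nr{w_{k-1}}$, the brace is $h(\theta) := \kappa_g\theta + C\sqrt{1-\theta^2}$ with $C = \frac{\sqrt{2}\,\hat\kappa_g}{(1-\kappa_g)^2}S$. Maximizing $h$ over $\theta\in[0,1]$ — the unconstrained max of $a\theta + b\sqrt{1-\theta^2}$ over $\theta\in[-1,1]$ is $\sqrt{a^2+b^2}$, and the maximizer $\theta = a/\sqrt{a^2+b^2}$ lies in $[0,1]$ here — gives $\max_\theta h(\theta) = \sqrt{\kappa_g^2 + C^2}$. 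So $\nr{w_{k+1}} < \nr{w_k}$ is guaranteed whenever $\kappa_g^2 + C^2 < 1$, i.e. $C^2 < 1-\kappa_g^2$, i.e. $\frac{2\hat\kappa_g^2}{(1-\kappa_g)^4}S^2 < 1-\kappa_g^2$, i.e. $S < \frac{\sqrt{1-\kappa_g^2}\,(1-\kappa_g)^2}{\sqrt{2}\,\hat\kappa_g}$, which is exactly \eqref{eqn:m1con} with $S = \nr{w_k}+\nr{w_{k-1}}$. The reason the hypothesis is stated for two consecutive iterations $j=k-1,k$ is that \eqref{eqn:m1conb} at step $k$ involves $\nr{w_{k-1}}$, so to start the induction one needs the bound to produce $\nr{w_{k+1}}<\nr{w_k}$ for the first such $k$; thereafter $\nr{w_j}+\nr{w_{j-1}}$ is decreasing in $j$ (each new summand is smaller than the one it replaces), so \eqref{eqn:m1con} propagates and monotone decrease persists for all subsequent iterations.

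The main obstacle I anticipate is not any single estimate but making the bookkeeping clean: one must be careful that the bound $\sqrt{1-t^2}+t\le\sqrt{2}$ is applied to \emph{both} $\theta_k$ and $\theta_{k-1}$ terms, that the factor $\sigma^{-1}$ appearing once as the overall scaling (from $\hat\kappa_g\sigma^{-1}\sqrt{1-\theta_k^2}$) and once inside each damping term is correctly accounted for (producing the $(1-\kappa_g)^{-2}$, not $(1-\kappa_g)^{-1}$, in \eqref{eqn:m1conb}), and that the passage from "brace $<1$" to "monotone decrease for all subsequent iterates" uses the fact that \eqref{eqn:m1con} is a \emph{decreasing} quantity along the iteration, so it need only be verified once. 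I would also note explicitly that $\sigma^{-1}\ge 1$ is what licenses replacing $\sigma^{-1}\sqrt{1-\theta^2}+\theta$ by $\sigma^{-1}(\sqrt{1-\theta^2}+\theta)$; this is where $\kappa_g<1$ is used a second time, beyond identifying $\sigma = 1-\kappa_g$.
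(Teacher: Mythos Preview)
Your proposal is correct and follows essentially the same route as the paper: specialize \eqref{eqn:m1-thm} with $\beta=1$ and $\sigma=1-\kappa_g$, bound each $\sqrt{1-\theta_j^2}+\theta_j$ by $\sqrt 2$ to obtain \eqref{eqn:m1conb}, then maximize $\kappa_g\theta+C\sqrt{1-\theta^2}$ over $\theta\in[0,1]$ as $\sqrt{\kappa_g^2+C^2}$ and set this below one to get \eqref{eqn:m1con}, with the two-consecutive-iterates hypothesis seeding the induction that propagates the condition. Your explicit remark that $\sigma^{-1}\ge 1$ is needed to replace $\sigma^{-1}\sqrt{1-\theta^2}+\theta$ by $\sigma^{-1}(\sqrt{1-\theta^2}+\theta)$ makes transparent a step the paper leaves implicit when passing to \eqref{eqn:m1cor001}.
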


\begin{proof}
From \eqref{eqn:m1-thm} with $\beta_k = 1$ and $\sigma = (1-\kappa_g)$, 
the residual $\nr{w_{k+1}}$ satisfies
\begin{align}\label{eqn:m1cor001}
&\nr{w_{k+1}}  \le
 \nr{w_k} \bigg\{
\theta_k \kappa_g 
+ \f{\hat \kappa_g \sqrt{1-\theta_{k}^2}}{(1-\kappa_g)^2} 
\bigg( \nr{w_k}
  \Big(\sqrt{1-\theta_{k}^2} + \theta_{k} \Big)
  +\nr{w_{k-1}}
  \Big(\sqrt{1-\theta_{k-1}^2} + \theta_{k-1} \Big) 
  \bigg)  \bigg\}.
\end{align}
The maximum of $\sqrt{1-\theta^2} + \theta$ on $0 \le \theta \le 1$
is $\sqrt{2}$, attained at $\theta= 1/\sqrt{2}$. Applying this to 
$\theta_{k-1},\theta_k$ within the higher order terms yields \eqref{eqn:m1conb}.

Following the same idea, maximizing the bracketed term on the right hand side of
\eqref{eqn:m1conb} over $\theta_k$ 
\[
\theta_k \kappa_g 
+ \f{ \sqrt 2 \hat \kappa_g}{(1-\kappa_g)^2} \sqrt{1-\theta_{k}^2}
  \bigg( \nr{w_k} +\nr{w_{k-1}}
  \bigg)  \le 
\sqrt{\kappa_g^2 + \frac{2 \hat \kappa_g^2}{(1 - \kappa_g)^4} 
\big(\nr{w_k} + \nr{w_{k-1}}\big)^2}.
\]
Setting (the square of) the right-hand side expression less than one,
it follows that $\nr{w_{k+1}} < \nr{w_k}$ under condition \eqref{eqn:m1con}.
If this condition is satisfied for two consecutive iterates, then 
$\nr{w_{k+1}} < \nr{w_k}$ and $\nr{w_k} < \nr{w_{k-1}}$, which is sufficient 
to ensure monotonic decrease of the sequence.
\end{proof}

This corollary quantifies (in the contractive setting) the transition from the 
preasymptotic regime where the residuals may be large, to the asymptotic regime where
the residuals are small enough that the higher order terms ``don't count,''
and previous convergence results such as those in \cite{PRX18} hold (see also 
\cite{K18,ToKe15} for a different but related approach).
This will be generalized in Corollary \ref{cor:gmcon} for algorithmic
depths $m > 1$ where it will be sufficient for a similar condition to hold for
$m+1$ consecutive iterates.  However the monotonicity result holds only for
contractive operators. 
\section{Acceleration for depth $m>1$}\label{sec:genm}
The analysis for $m > 1$ is somewhat more complicated than for $m=1$, if only
because in the optimization problem for $m=1$, the matrix $F_k$ has only one
column. For $m > 1$, the columns of $F_k$ are in general not orthogonal, and the 
estimates that follow show how detrimental this lack of orthogonality can be to the
convergence rate.  
First some standard results from numerical linear algebra are
recalled. Then, Theorem \ref{thm:m1} is generalized to $m > 1$.

\begin{proposition}Let $R_j$ be a $j\times j$ upper triangular matrix given by
\[
R_j = \left( \begin{array}{cccc} R_{j-1} & b_j \\ 0 & r_{jj} \end{array} \right),
\]
where $R_{j-1}$ is an invertible $j-1 \times j-1$ upper triangular matrix, 
$b_{j}$ is a $j-1 \times 1$ vector of values, and $r_{jj} \ne 0$.  
Then $R_j$ is invertible and the inverse matrix satisfies
\[
R_j^{-1}
= \left( \begin{array}{cccc} R_{j-1}^{-1} & c_j \\ 0 & r_{jj}^{-1} \end{array} \right).
\]

\end{proposition}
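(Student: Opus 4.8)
The plan is to verify the claimed block form of $R_j^{-1}$ directly by multiplication, and to extract from that verification the formula for the block $c_j$. First I would observe that $R_j$ is block upper triangular with diagonal blocks $R_{j-1}$ (invertible by hypothesis) and $r_{jj}$ (nonzero by hypothesis), so its determinant is $\det(R_{j-1})\, r_{jj} \ne 0$ and hence $R_j$ is invertible. This already disposes of the invertibility claim.

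Next I would propose the ansatz
\[
M = \left( \begin{array}{cc} R_{j-1}^{-1} & c_j \\ 0 & r_{jj}^{-1} \end{array} \right),
\]
for an as-yet-undetermined $(j-1)\times 1$ vector $c_j$, and compute the product $R_j M$ blockwise. The $(1,1)$ block gives $R_{j-1} R_{j-1}^{-1} = I_{j-1}$; the $(2,1)$ block gives $0$; the $(2,2)$ block gives $r_{jj} r_{jj}^{-1} = 1$; and the $(1,2)$ block gives $R_{j-1} c_j + b_j r_{jj}^{-1}$. Setting this last block equal to $0$ forces $c_j = -r_{jj}^{-1} R_{j-1}^{-1} b_j$, and with this choice $R_j M = I_j$. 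Since $R_j$ is square and invertible, $M = R_j^{-1}$, which proves the stated block formula.

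There is essentially no obstacle here: the result is a standard fact about inverting block (and in particular upper triangular) matrices, and the only content is the bookkeeping of the four blocks in the product. The one point worth stating explicitly is that a one-sided inverse of a square matrix is automatically two-sided, so checking $R_j M = I$ suffices; alternatively one can also verify $M R_j = I$ by the same blockwise computation. I would present the argument in the forward direction (guess the form, confirm by multiplication) rather than by induction, since the proposition is a single step and the inductive structure over $j$ is supplied later when the bounds on the entries of $R_j^{-1}$ are assembled.
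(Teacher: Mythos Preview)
Your proof is correct. The paper's own argument is slightly different in presentation: rather than proposing the block ansatz and verifying by multiplication, it suggests computing each column of $R_j^{-1}$ by solving the triangular system $R_j c = \hat u_i$ for the standard basis vectors $\hat u_i$, $i = 1,\ldots,j$. Both routes are elementary and equally valid; your block-multiplication approach is arguably cleaner for this isolated proposition and yields the explicit formula $c_j = -r_{jj}^{-1} R_{j-1}^{-1} b_j$ directly, while the paper's column-by-column back-substitution viewpoint feeds more naturally into the entrywise recursion \eqref{eqn:inv001} used later in the proof of Lemma~\ref{lem:invR}.
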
\label{prop:tridiag-inv}

The next two results are specific to the economy (or thin) 
QR decomposition of $n \times m$ matrix $A$ (see, for instance 
\cite[Chapter 5]{GoVL96}). The following notation will be used throughout 
the remainder of this section.
For $u,v \in \R^n$, let $\cos(u,v) = (u,v)/(\nr{u}\nr{v})$ be the usual direction
cosine between vectors $u$ and $v$, with the corresponding direction sine satisfying
$\sin^2(u,v) = 1 - \cos^2(u,v)$.  
Let $\cA_j$ be the subspace of $\R^n$ given by $\cA_j = \spa\{a_1, \ldots, a_j\}$, with
orthogonal basis $\{q_1, \ldots, q_j\}$; 
let $\sin^2(u,\cA_j) = 1-\sum_{i = 1}^j \cos^2(u,q_i)$, 
denote the square of the direction sine between vector $u$ and $\cA_j$.  

\begin{proposition}\label{prop:diag-entries}
Let $\hat Q \hat R$ be the economy QR decomposition of a matrix $A \in \R^{n\times m}$,
$n \ge m$ where $A$ has columns $a_1, \ldots a_m$, and $\hat Q$ has orthonormal columns 
$q_1, \ldots q_m$.
Then 
\begin{align}\label{eqn:diag-entries}
r_{jj}^2 = \nr{a_j}^2 \sin^2(a_{j}, \cA_{j-1}), ~j = 1, \ldots, m.
\end{align}
\end{proposition}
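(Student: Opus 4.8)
The plan is to exploit the standard recursive structure of the Gram--Schmidt / QR process. Recall that in the economy QR decomposition $A = \hat Q \hat R$, the $j$th column relation reads $a_j = \sum_{i=1}^{j} r_{ij} q_i$, where $r_{ij} = (q_i, a_j)$ for $i < j$ and $r_{jj} = \nr{a_j - \sum_{i=1}^{j-1} (q_i,a_j) q_i}$ is (up to sign) the norm of the component of $a_j$ orthogonal to $\cA_{j-1} = \spa\{q_1,\dots,q_{j-1}\} = \spa\{a_1,\dots,a_{j-1}\}$. Since $\{q_1,\dots,q_{j-1}\}$ is an orthonormal basis of $\cA_{j-1}$, the orthogonal projection of $a_j$ onto $\cA_{j-1}$ is $P_{j-1} a_j = \sum_{i=1}^{j-1}(q_i,a_j) q_i$, and so $r_{jj}^2 = \nr{a_j - P_{j-1} a_j}^2 = \nr{(I - P_{j-1}) a_j}^2$.

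First I would note the orthogonal (Pythagorean) decomposition $\nr{a_j}^2 = \nr{P_{j-1} a_j}^2 + \nr{(I-P_{j-1}) a_j}^2$, which gives $r_{jj}^2 = \nr{a_j}^2 - \nr{P_{j-1}a_j}^2$. Second, using orthonormality of the $q_i$, I would compute $\nr{P_{j-1} a_j}^2 = \sum_{i=1}^{j-1} (q_i, a_j)^2 = \nr{a_j}^2 \sum_{i=1}^{j-1} \cos^2(a_j, q_i)$, by the definition $\cos(a_j,q_i) = (a_j,q_i)/(\nr{a_j}\nr{q_i})$ with $\nr{q_i}=1$. Substituting, $r_{jj}^2 = \nr{a_j}^2\bigl(1 - \sum_{i=1}^{j-1}\cos^2(a_j,q_i)\bigr)$, and the parenthesized quantity is exactly $\sin^2(a_j, \cA_{j-1})$ by the definition given just before the proposition. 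This yields \eqref{eqn:diag-entries}. The case $j=1$ is the degenerate convention $\cA_0 = \{0\}$, for which $\sin^2(a_1,\cA_0)=1$ and $r_{11} = \nr{a_1}$; I would either state this convention explicitly or simply start the induction at $j=2$.

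The argument is essentially a one-line consequence of properties of orthogonal projections, so there is no serious obstacle; the only care needed is bookkeeping of which inner-product conventions are in force (the paper allows a general inner product $\forma$, so "orthonormal" and all projections are with respect to that inner product, and the $\cos$/$\sin$ notation introduced in the excerpt is consistent with this). One may alternatively phrase it inductively using Proposition~\ref{prop:tridiag-inv}'s companion recursive viewpoint: the $j$th step of modified Gram--Schmidt subtracts from $a_j$ its projection onto $\cA_{j-1}$ and normalizes, so $r_{jj}$ is by construction the norm of that orthogonal residual; but the direct projection computation above is cleaner and I would present that.

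If a fully self-contained presentation is wanted, I would include the short verification that $\spa\{q_1,\dots,q_{j-1}\} = \spa\{a_1,\dots,a_{j-1}\} = \cA_{j-1}$ (immediate from the upper-triangularity of $\hat R$ together with $r_{ii}\ne 0$ for $i<j$, which holds because the first $j-1$ columns are assumed linearly independent in the economy QR setting), so that the orthogonal projection onto $\cA_{j-1}$ really is $\sum_{i=1}^{j-1}(q_i,\cdot)q_i$. With that in hand the identity $r_{jj}^2 = \nr{a_j}^2\sin^2(a_j,\cA_{j-1})$ follows immediately for each $j = 1,\dots,m$.
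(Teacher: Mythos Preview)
Your proposal is correct and follows essentially the same approach as the paper: both use the Gram--Schmidt relation $r_{jj} = \nr{a_j - \sum_{i=1}^{j-1}(q_i,a_j)q_i}$, apply orthogonality (Pythagoras) to get $r_{jj}^2 = \nr{a_j}^2 - \sum_{i=1}^{j-1}(q_i,a_j)^2$, and then factor out $\nr{a_j}^2$ to recognize the definition of $\sin^2(a_j,\cA_{j-1})$. Your write-up is more detailed (explicitly identifying the projection, handling the $j=1$ convention, and verifying $\spa\{q_1,\dots,q_{j-1}\}=\cA_{j-1}$), but the underlying argument is the same.
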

The proof is standard and follows from writing the columns of $\hat Q$ as
$q_j = v_j /\norm{v_j}$ with $v_j = a_j - \sum_{i = 1}^{j-1}(q_i,a_j)q_i$.
Then $r_{jj}^2 = \norm{v_j}^2 = \norm{a_j}^2 - \sum_{i = 1}^{j-1}(q_i,a_j)^2$ from 
orthogonality.  Factoring out $\norm{a_j}^2$ from each term yields the result.

The next technical lemma gives a bound on the elements of $\hat R^{-1}$;
it is proven here (in the appendix) to make the manuscript more self-contained.
\begin{lemma}\label{lem:invR}
Let $\hat Q \hat R$ be the economy QR decomposition of matrix $A \in \R^{n\times m}$,
$n \ge m$, where $A$ has columns $a_1, \ldots a_m$, $\hat Q$ has orthonormal columns 
$q_1, \ldots q_m$, and $\hat R = (r_{ij})$ is an invertible upper-triangular 
$m \times m$ matrix. Let $\hat R^{-1} = (s_{ij})$. 

Suppose there is a constant $0 < c_s \le 1$ such that 
$|\sin(a_j, \cA_{j-1})| \ge c_s, ~j = 2, \ldots, m$, which implies another constant 
$0 \le c_t < 1$ with
$|\cos(a_j, q_{i})|\le c_t, ~j = 2, \ldots, m$ and $i = 1, \ldots, j-1$.
Then it holds that
\begin{align}\label{eqn:invRd}
s_{11} &= \f {1}{\nr{a_1}},
\quad &&s_{ii} \le \f{1}{\nr{a_i}c_s}, ~i = 2, \ldots, m, 
\\ \label{eqn:invR}
|s_{1j}| &\le \f{c_t(c_t+c_s)^{j-2}}{\nr{a_1}c_s^{j-1}},  
~\text{ and }~
&&|s_{ij}| \le \f{c_t(c_t+c_s)^{j-i-1}}{\nr{a_i}c_s^{j-i+1}},  
~\text{ for }
\end{align}
$i= 2, \ldots, m-1$ and  $j = i+1, \ldots,  m$.
\end{lemma}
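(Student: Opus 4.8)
The plan is to prove the bounds on the entries $s_{ij}$ of $\hat R^{-1}$ by induction on the size $m$ of the matrix, leveraging Proposition \ref{prop:tridiag-inv} to express the inverse of the $m\times m$ upper triangular matrix in terms of the inverse of its leading $(m-1)\times(m-1)$ principal submatrix. First I would record the base case: $s_{11} = 1/\nr{a_1}$ is immediate, and by Proposition \ref{prop:diag-entries} with the sine hypothesis, $r_{jj}^2 = \nr{a_j}^2 \sin^2(a_j,\cA_{j-1}) \ge \nr{a_j}^2 c_s^2$, hence $s_{jj} = 1/r_{jj} \le 1/(\nr{a_j} c_s)$, which gives \eqref{eqn:invRd}. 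The derivation of $|\cos(a_j,q_i)| \le c_t$ from $|\sin(a_j,\cA_{j-1})|\ge c_s$ is just the observation that $\cos^2(a_j,q_i) \le \sum_{l=1}^{j-1}\cos^2(a_j,q_l) = 1 - \sin^2(a_j,\cA_{j-1}) \le 1 - c_s^2$, so one may take $c_t = \sqrt{1-c_s^2}$; note $c_t + c_s \ge 1$ so the powers $(c_t+c_s)^{j-i-1}$ are at worst harmless, which is why they appear naturally in the recursion.

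The inductive step is the heart of the argument. Writing $\hat R = \left(\begin{smallmatrix} R_{m-1} & b_m \\ 0 & r_{mm}\end{smallmatrix}\right)$, Proposition \ref{prop:tridiag-inv} gives the last column $c_m$ of $\hat R^{-1}$ via $R_{m-1} c_m = -r_{mm}^{-1} b_m$, i.e. $c_m = -r_{mm}^{-1} R_{m-1}^{-1} b_m$, while the upper-left block of $\hat R^{-1}$ is exactly $R_{m-1}^{-1}$, whose entries satisfy the claimed bounds by the induction hypothesis. So only the entries $s_{im}$ for $i = 1,\ldots,m-1$ need to be bounded, and these are $s_{im} = -r_{mm}^{-1}\sum_{l=i}^{m-1} s_{il}\, (b_m)_l$. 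Here $(b_m)_l = r_{lm} = \nr{a_m}(q_l, a_m)/\nr{a_m}\cdot\nr{a_m} = \nr{a_m}\cos(a_m,q_l)$ — more precisely $r_{lm} = (q_l,a_m) = \nr{a_m}\cos(a_m,q_l)$, so $|r_{lm}| \le \nr{a_m} c_t$ — and $|r_{mm}^{-1}| \le 1/(\nr{a_m} c_s)$. Substituting the induction bounds on $|s_{il}|$, the factors of $\nr{a_m}$ cancel, and one is left with a sum of the form $\frac{c_t}{\nr{a_i} c_s}\sum_{l=i}^{m-1} |s_{il}|\cdot$(stuff), which I would bound term by term: the diagonal term $l=i$ contributes $\frac{c_t}{\nr{a_i}c_s}\cdot\frac{1}{\nr{a_i}c_s}$—wait, the $\nr{a_i}$ should not be squared, so I must be careful that the bound on $s_{im}$ carries a single $1/\nr{a_i}$; this works because $|r_{lm}|\le \nr{a_m} c_t$ and $|r_{mm}^{-1}|\le 1/(\nr{a_m}c_s)$ combine to $c_t/c_s$ with the $\nr{a_m}$'s gone, leaving the $1/\nr{a_i}$ to come solely from $|s_{il}|$. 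The remaining work is a geometric-type estimate: using the induction hypothesis $|s_{il}| \le c_t(c_t+c_s)^{l-i-1}/(\nr{a_i} c_s^{l-i+1})$ for $l > i$ and $|s_{ii}| \le 1/(\nr{a_i}c_s)$, the sum $\sum_{l=i}^{m-1}|s_{il}| c_s^{?}$ telescopes/geometrically bounds into the claimed $c_t(c_t+c_s)^{m-i-1}/(\nr{a_i}c_s^{m-i+1})$.

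The main obstacle I anticipate is bookkeeping: getting the exponents of $c_s$ and of $(c_t+c_s)$ exactly right in the inductive step, since the sum $\sum_{l=i}^{m-1}|s_{il}||r_{lm}|$ mixes the diagonal term (with its different shape $1/(\nr{a_i}c_s)$) with the off-diagonal terms. The key algebraic identity to verify is that $c_s^{-1}\big(c_s^{-1} + c_t \sum_{l=i+1}^{m-1}(c_t+c_s)^{l-i-1}c_s^{-(l-i+1)}\big)$, after pulling out the right powers, equals $(c_t+c_s)^{m-i-1}c_s^{-(m-i)}$ times a bounded factor absorbing the extra $c_t$; this reduces to the elementary identity $1 + \frac{c_t}{c_s}\sum_{p=0}^{m-i-2}\big(\frac{c_t+c_s}{c_s}\big)^p = \big(\frac{c_t+c_s}{c_s}\big)^{m-i-1}$, which is just the closed form of a finite geometric series (since $\frac{c_t}{c_s}\cdot\frac{(c_t+c_s)/c_s^{m-i-1} - 1/c_s^{\,0}\cdots}{\ldots}$ collapses because $\frac{c_t+c_s}{c_s} - 1 = \frac{c_t}{c_s}$). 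Once that identity is in hand the estimate for $s_{1m}$ follows by the same computation with $\nr{a_1}$ in place of $\nr{a_i}$ and $s_{11} = 1/\nr{a_1}$ as the base term, matching \eqref{eqn:invR}. I would present the $i\ge 2$ and $i=1$ cases together, noting the only difference is $s_{ii}\le 1/(\nr{a_i}c_s)$ versus $s_{11}=1/\nr{a_1}$ exactly, which accounts for the one-fewer power of $c_s$ in the $s_{1j}$ bound compared to the $s_{ij}$ bound.
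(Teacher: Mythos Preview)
Your proposal is correct and follows essentially the same architecture as the paper's proof: induction on the matrix size, using the block structure from Proposition~\ref{prop:tridiag-inv} to reduce to bounding the entries of the new last column, and closing with the same finite geometric series identity (the paper writes it as the telescoping sum $\sum_{j=1}^{n-1} c_t(c_t+c_s)^{n-j-1}c_s^{j-1} + c_s^{n-1} = (c_t+c_s)^{n-1}$, which is equivalent to your $1 + \tfrac{c_t}{c_s}\sum_{p=0}^{n-2}\big(\tfrac{c_t+c_s}{c_s}\big)^p = \big(\tfrac{c_t+c_s}{c_s}\big)^{n-1}$).

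The one genuine difference worth noting is the recursion you use for the last column. The paper derives $s_{kp} = -r_{kk}^{-1}\sum_{l=k+1}^{p} r_{kl}\, s_{lp}$ (from $\hat R\hat R^{-1}=I$), which expresses $s_{kp}$ in terms of entries \emph{below it in the same column}; this forces a second, inner induction up the column on $k$. You instead use $s_{im} = -r_{mm}^{-1}\sum_{l=i}^{m-1} s_{il}\, r_{lm}$ (from $\hat R^{-1}\hat R=I$, equivalently from $c_m = -r_{mm}^{-1}R_{m-1}^{-1}b_m$), which expresses $s_{im}$ in terms of entries \emph{in the same row}, all of which lie in $R_{m-1}^{-1}$ and are therefore already controlled by the outer induction hypothesis. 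This eliminates the inner induction entirely, so your argument is slightly cleaner. The resulting bounds and the algebra that verifies them are identical.
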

The constant $c_s> 0$ ensures the full rank of $A$ and essentially bounds $\hat Q$ 
away from degeneracy, assuring sufficient linear independence of its columns.
While the results are simpler in form if the second constant is taken as $c_t=1$, 
the condition $c_s > 0$ implies $c_t < 1$.  By taking this second constant 
into account, the results reflect that if the columns of $A$ are close to (or actually)
orthogonal, then $c_t$ and the off-diagonal elements are close to (or actually)  zero. 

The next lemma generalizes Lemma \ref{lem:m1} to $m > 1$.  The technical difficulty
of the more complicated relationship between the optimization coefficients and
optimization gain is handled by expressing both in terms of a QR decomposition and
then making use of Lemma \ref{lem:invR}.

\begin{lemma}\label{lem:genm}
Let Assumption \ref{assume:g} hold. 
Let $v_{n+1} = w_{n+1}- w_n$, 
and let Assumption \ref{a:key} hold with constant $\sigma$ for
$n = j-m, \ldots, j$.
Further, assume there is a constant $c_s>0$ such that
\begin{align}\label{eqn:gmcond}
|\sin\left(v_{n+1}, \spa\{v_{n+2}, \ldots v_{j+1} \}\right)| &\ge c_s, 
~n = j-m+1, \ldots, j-1,
\end{align}
which implies there is a constant $0 \le c_t < 1$ which satisfies
\[
|\cos\left(v_{n+1}, v_p \}\right)| \le c_t, 
~n = j-m+1, \ldots, j-1, ~\text{ and }~ p = n+2, \ldots, j+1.
\]

Then the following bound holds for the difference between consecutive iterates
${e_{j+1}} = {x_{j+1} - x_j}$:
\begin{align}\label{eqn:lgenm}
\nr{e_{j+1}} \le \nr{w_{j+1}}\left(C_{F,j+1}
\sqrt{1 - \theta_{j+1}^2} + \beta_j \theta_{j+1}\right),
\end{align}
where the constant $C_{F,j+1}$ is given by
\begin{align}\label{eqn:cfdef}
C_{F,j+1} \coloneqq 
\sigma^{-1} \left(
1 + \f{(1+c_t)\sum_{l = 1}^{m_j-1}\binom{m_j-1}{l}c_t^{l-1}c_s^{m_j-l-1}}{c_s^{m_j-1}}
\right).
\end{align}
Additionally, the following bounds hold for terms of the form $\nr{e_n\gamma^{j+1}_n}$.
\begin{align}\label{eqn:lgenm2}
\nr{e_{n}\gamma_{n}^{j+1}} &\le C_{n,j+1}\beta_j\sqrt{1-\theta_{j+1}^2}
\nr{w_{j+1}}, ~ n = j-m_j, \ldots, j,
\end{align}
where the constants $C_{n,j+1}$ are given by
\begin{align}\label{eqn:cndef}
C_{n,j+1} \coloneqq \left\{ \begin{array}{ll}
\f{1}{\sigma}\left(\f{c_t+c_s}{c_s} \right)^{{m_j}-1}, & n = j \\
\f{1}{\sigma c_s}\left(\f{c_t+c_s}{c_s} \right)^{m_j-(j-n+1)}, & n = j-m_j, \ldots, j-1
\end{array}\right. .
\end{align}

\end{lemma}
The additional assumption of \eqref{eqn:gmcond} 
not found in the $m=1$ case requires
that the columns of the matrix used in the least squares problem \eqref{eqn:opt-v1}, 
$v_{j+1}, \ldots, v_{j-m+2}$, maintain sufficient linear independence. 
See Subsection \ref{sec:practical} on ensuring this assumption holds during
a simulation.

\begin{proof}
Throughout this proof, depth $m_j$ will be denoted by $m$, for simplicity.
Starting with the update for $x_{j+1}$ from \eqref{eqn:update-v1} and 
\eqref{eqn:gammadef}, defined for
optimization coefficients $\gamma^{j+1}$ from \eqref{eqn:opt-v1},
and the matrix $E_j$ given by \eqref{eqn:EkFkdef}, shows
$x_{j+1} - x_j = - E_j \gamma^{j+1} + \beta_k\wa_{j+1}. $
Taking norms and applying \eqref{eqn:thetak} yields
\begin{align}\label{eqn:lgm001}
\nr{e_{j+1}} & \le \nr{E_j \gamma^{j+1}} + \beta_j \theta_{j+1} \nr{w_{j+1}}.
\end{align}

By \eqref{eqn:opt-v1}, the coefficients $\gamma^{j+1}$ are the least-squares solution
to $F_j \gamma^{j+1} = w_{j+1}$, where $F_j$ is given by \eqref{eqn:EkFkdef}.  
Using an economy QR-decomposition provides
$\hat R \gamma^{j+1} = \hat Q^\top w_{j+1}$, by which 
\eqref{eqn:lgm001} may be written 
\begin{align}\label{eqn:lgm002}
\nr{e_{j+1}}  
\le  \nr{E_j \hat R^{-1} \hat Q^\top w_{j+1}} + \beta_j \theta_{j+1} \nr{w_{j+1}}. 
\end{align}

The first term on the right of \eqref{eqn:lgm002} can be bounded in terms
of $\nr{w_{j+1}}$ by considering an explicit expression for the optimization gain 
$\theta_{j+1}$, as first discussed in \cite{EPRX19}.
From \eqref{eqn:thetak} and the unique decomposition $w_{j+1} = w_R + w_N$ with
$w_R \in \ran(F_j)$ and $w_N \in \nulll((F_j)^\top)$,
the null-space component $w_N$ is the least-squares residual satisfying
$\nr{w_N} = \nr{F_j \gamma^{j+1} - w_{j+1}} = \nr{\wa_{j+1}} 
= \theta_{j+1} \nr{w_{j+1}}$,
meaning $\theta_{j+1} = \sqrt{1 - \nr{\hat Q^\top w_{j+1}}^2/\nr{w_{j+1}}^2}$, or, 
by rearranging
\begin{align}\label{eqn:lgm003}
\nr{w_{j+1}}\sqrt{1 - \theta_{j+1}^2}
=\nr{\hat Q^\top w_{j+1}}.
\end{align}

The first term on the right-hand side of \eqref{eqn:lgm002} can now be controlled
by \eqref{eqn:lgm003}, yielding
\begin{align}\label{eqn:lgm004}
\nr{E_j \hat R^{-1} \hat Q^\top w_{j+1}}
\le \nr{E_j \hat R^{-1}}\nr{\hat Q^\top w_{j+1}}
\le \nr{E_j \hat R^{-1}}
\nr{w_{j+1}}\sqrt{1 - \theta_{j+1}^2}.
\end{align}
It remains to bound $\nr{E_j \hat R^{-1}}$. Writing $\hat R^{-1} = S = (s_{ij})$,
\begin{align}\label{eqn:lgm005}
\nr{E_j \hat R^{-1}} &= \nr{\left(\begin{array}{ccccccc} 
e_j \sum_{n =1}^m s_{1n} & e_{j-1} \sum_{n = 2}^m s_{2n} & \cdots &
e_{j-m+1} s_{mm}
\end{array}\right)}
\nonumber \\ & 
\le 
\nr{e_j \sum_{n =1}^m s_{1n}} + \nr{e_{j-1} \sum_{n = 2}^m s_{2n}} + \cdots +
\nr{e_{j-m+1} s_{mm}},
\end{align}
where the last inequality follows from the standard bound of the matrix 2-norm 
by the Frobenius norm. Apply now the results of the technical Lemma \ref{lem:invR}.

For the first term in the sum of vector norms in \eqref{eqn:lgm005},
applying \eqref{eqn:invRd}-\eqref{eqn:invR} of Lemma \ref{lem:invR} then taking the
finite geometric sum produces the bound
\begin{align}\label{eqn:lgm006}
\nr{e_j \sum_{n =1}^m s_{1n}} & \le
\nr{e_j}\left| \sum_{n = 1}^m s_{1n} \right|
\nonumber \\
& \le \f{\nr{e_j}}{\nr{w_{j+1}-w_j}}\left( 1 + 
\sum_{n = 2}^m \f{c_t(c_t + c_s)^{n-2}} {c_s^{n-1}} \right)
\nonumber \\
& = \f{\nr{e_j}}{\nr{w_{j+1}-w_j}} 
\left(\f{c_t + c_s}{c_s} \right)^{m-1}
\nonumber \\
& \le \sigma^{-1}\left( \f{c_t+c_s}{c_s}\right)^{m-1},
\end{align}
where the last inequality follows from the hypothesis \eqref{eqn:m1key}.

Proceed similarly for the remaining vector norms of \eqref{eqn:lgm005}, indexed by
$p = 2, \ldots, m$, noting the additional factor of $c_s$ in the denominator, to get
\begin{align}\label{eqn:lgm007}
\nr{e_{j-p+1} \sum_{n = p}^m s_{pn}} & \le
\f{1}{\sigma c_s}\left(1 + \sum_{n = p+1}^m \f{(c_t+c_s)^{n-(p+1)}}{c_s^{n-p}}  \right)
\le
\f{1}{\sigma c_s}\left( \f{c_t+c_s}{c_s}\right)^{m-p}.
\end{align}
Finally, adding the contributions from $p = 1$ to $p = 2, \ldots, m$ from 
\eqref{eqn:lgm006} and \eqref{eqn:lgm007} and applying the total to \eqref{eqn:lgm005}
yields, assuming $c_t \ne 0$
\begin{align}\label{eqn:lgm008}
\nr{E_j \hat R^{-1}} & \le \sigma^{-1} \left(\f{(c_t+c_s)^{m-1}(c_t + 1) - c_s^{m-1}}
{c_s^{m-1}c_t}\right)
= \sigma^{-1} \left(
1 + \f{(1+c_t)\sum_{j = 1}^{m-1}{}\binom{m-1}{j}c_t^{j-1}c_s^{m-j-1}}{c_s^{m-1}}
\right).
\end{align}
If it so happens that $c_t= 0$, meaning the columns of $F_k$ are orthogonal, then
$c_s = 1$ and \eqref{eqn:lgm008} is in agreement with summing the terms directly
from \eqref{eqn:lgm006} and \eqref{eqn:lgm007} yields 
$\nr{E_j \hat R^{-1}} \le  m/\sigma$, in agreement in \eqref{eqn:lgm008}. 
Putting \eqref{eqn:lgm008} together with \eqref{eqn:lgm003} yields
\begin{align*}
\nr{e_{j+1}}& \le 
C_{F,j+1} \sqrt{1- \theta_{j+1}^2}\nr{w_{j+1}}
+ \beta_j \theta_{j+1} \nr{w_{j+1}},
\end{align*}
with $C_{F,j+1}$ given by \eqref{eqn:cfdef}, hence the result \eqref{eqn:lgenm}.

For the second result, \eqref{eqn:lgenm2}, expanding \eqref{eqn:lgm002}, shows
\begin{align}\label{eqn:lgm009}
\left(\begin{array}{cccccc}e_j\gamma_j^{j+1} & e_{j-1} \gamma_{j-1}^{j+1}
  & \cdots & e_{j-m+1}\gamma_{j-m+1}^{j+1} \end{array} \right)
 = E_j \gamma^{j+1}
 = E_j \hat R^{-1} \hat Q^\top w_{j+1}.
\end{align}
Accordingly, 
$e_{j-p+1}\gamma^{j+1}_{j-p+1} = e_{j-p+1} s^{p} \hat Q^\top w_{j+1}$,
where $s^p$ is row $p$ of $\hat R^{-1}$. Hence following
\eqref{eqn:lgm006} and applying \eqref{eqn:lgm003} produces for the first column of 
\eqref{eqn:lgm009}:
\begin{align*}
\nr{e_j \gamma^{j+1}_j}  \le \nr{e_j \sum_{n = 1}^{m} s_{1n}} \nr{\wa_{j+1}}
 \le \sigma^{-1} \left(\f{c_t + c_s}{c_s} \right)^{{m}-1} 
\beta_j\sqrt{1-\theta_{j+1}^2}\nr{w_{j+1}}.
\end{align*}
For the remaining columns, following now \eqref{eqn:lgm007} allows
\begin{align*}
\nr{e_{j-p+1} \gamma^{j+1}_{j-p+1}}  
&\le \nr{e_{j-p+1} \sum_{n = 1}^m s_{pn}} \nr{\wa_{j+1}}
\le \f{1}{\sigma c_s} 
\left(\f{c_t + c_s}{c_s} \right)^{{m_j}-p} \beta_j\sqrt{1-\theta_{j+1}^2}\nr{w_{j+1}},
\end{align*}
which establishes the second result \eqref{eqn:lgenm2} with $n = j-p+1$.

\end{proof}	

Lemma \eqref{lem:genm} is now used to establish one-step residual
bounds for general depths $m$.

\begin{theorem}\label{thm:genm}
Suppose the hypotheses of Lemma \ref{lem:genm} for $j = k-m, \ldots, k-1$. 
Then the following bound holds for the nonlinear residual $\nr{w_{k+1}}$
generated by Algorithm \ref{alg:anderson} with depth $m$:
\begin{align}\label{eqn:tgenm}
\nr{w_{k+1}} & \le \nr{w_k} \Bigg\{
 \theta_k ((1-\beta_{k-1}) + \kappa_g \beta_{k-1})
+ \f{\hat \kappa_g}{2}\bigg(
 \nr{w_{k}}h(\theta_{k})h_{k-1}(\theta_k)
\nonumber \\ &
 + 2  \sum_{n = k-{m_{k-1}}+1}^{k-1} 
\Big( \nr{w_n}h(\theta_n) \sum_{j = n}^{k-1} h_j(\theta_k) \Big)
 + \nr{w_{k-m_{k-1}}}h(\theta_{k-m_{k-1}})
  \sum_{j = {k-m_{k-1}}}^{k-1} h_j(\theta_k)
 \bigg) \Bigg\},
\end{align}
where
\begin{align}
h(\theta_j)  = C_{F,j} \sqrt{1 - \theta_j^2} + \beta_{j-1}\theta_j, 
\qquad
h_j(\theta_k)  = C_{j,k} \beta_{k-1}\sqrt{1-\theta_k^2},
\end{align}
and the constants $C_{F,j}$ and $C_{j,k}$ are given by \eqref{eqn:cfdef} and 
\eqref{eqn:cndef}, respectively.
\end{theorem}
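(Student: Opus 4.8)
The plan is to combine the residual expansion \eqref{eqn:wk009} with the bounds on differences of consecutive iterates from Lemma \ref{lem:genm}, in much the same way Theorem \ref{thm:m1} was obtained from Lemma \ref{lem:m1}. Concretely, start from \eqref{eqn:wk009} with $k$ in place of the generic index, writing
\[
\nr{w_{k+1}} \le \big((1-\beta_{k-1}) + \kappa_g \beta_{k-1}\big)\nr{\wa_k}
+ \f{\hat\kappa_g}{2}\sum_{n = k-m_{k-1}}^{k-1}\big(\nr{e_{n+1}}+\nr{e_n}\big)\sum_{j=n}^{k-1}\nr{e_j\gamma^k_j}.
\]
The first term is handled immediately by \eqref{eqn:thetak}, namely $\nr{\wa_k} = \theta_k\nr{w_k}$. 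For the double sum, the strategy is to bound the two types of factors separately: factors of the form $\nr{e_{n+1}}$ (and the edge terms $\nr{e_{k-m_{k-1}}}$) are controlled using \eqref{eqn:lgenm} of Lemma \ref{lem:genm}, which gives $\nr{e_{n+1}} \le \nr{w_{n+1}}h(\theta_{n+1})$ with $h(\theta_j) = C_{F,j}\sqrt{1-\theta_j^2}+\beta_{j-1}\theta_j$; while factors of the form $\nr{e_j\gamma^k_j}$ are controlled using \eqref{eqn:lgenm2}, which gives $\nr{e_j\gamma^k_j}\le C_{j,k}\beta_{k-1}\sqrt{1-\theta_k^2}\,\nr{w_{k+1}}$...

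Here a subtlety arises: \eqref{eqn:lgenm2} bounds $\nr{e_j\gamma^k_j}$ in terms of $\nr{w_{j+1}}$ where $j+1$ is the \emph{current} optimization step, i.e. $k$ here. So applying \eqref{eqn:lgenm2} with the Lemma's index $j+1 \mapsto k$ yields $\nr{e_n\gamma^k_n}\le C_{n,k}\beta_{k-1}\sqrt{1-\theta_k^2}\,\nr{w_k}$ for $n = k-m_{k-1},\ldots,k-1$, which is exactly $h_n(\theta_k)\nr{w_k}$ in the notation of the theorem. Substituting these into the double sum and factoring out $\nr{w_k}$ and the shared $h_j(\theta_k)$ structure, one reorganizes the sum so the terms $\nr{e_{n+1}}$ and $\nr{e_n}$ are replaced by $\nr{w_{n+1}}h(\theta_{n+1})$ and $\nr{w_n}h(\theta_n)$ respectively; after reindexing the outer sum (shifting $n+1 \to n$ in the first piece), one collects the $n = k$ contribution (giving the $\nr{w_k}h(\theta_k)h_{k-1}(\theta_k)$ term), the interior contributions $n = k-m_{k-1}+1,\ldots,k-1$ (giving the middle sum with the factor $2$ from combining the two copies $\nr{e_{n+1}}$ and $\nr{e_n}$ of each interior residual), and the boundary contribution $n = k-m_{k-1}$ (giving the last term with $\nr{w_{k-m_{k-1}}}h(\theta_{k-m_{k-1}})$ and no factor $2$, since that residual appears only once). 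Factoring $\nr{w_k}$ out of everything then produces \eqref{eqn:tgenm}.

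The main obstacle I expect is purely bookkeeping rather than conceptual: getting the index ranges and the factors of $2$ exactly right when merging the two sums $\sum_n \nr{e_{n+1}}\sum_{j=n}^{k-1}\nr{e_j\gamma^k_j}$ and $\sum_n \nr{e_n}\sum_{j=n}^{k-1}\nr{e_j\gamma^k_j}$ into a single sum over residuals $\nr{w_n}$, since each residual $\nr{w_n}$ with $k-m_{k-1} < n < k$ picks up a contribution from both copies (hence the coefficient $2$), whereas the two extreme indices appear asymmetrically. One must also be careful that the hypotheses of Lemma \ref{lem:genm} are invoked at each of the steps $j = k-m,\ldots,k-1$ so that every $\nr{e_j}$ and $\nr{e_j\gamma^k_j}$ in the expansion is legitimately bounded; this is exactly what the theorem's hypothesis provides. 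A minor point is that \eqref{eqn:lgenm2} requires $n \le j$ (the current step), which is satisfied here since all indices in the sum are $\le k-1 < k$. Aside from these indexing matters, the proof is a direct substitution, and no new estimates beyond Lemma \ref{lem:genm} are needed.
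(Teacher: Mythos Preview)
Your proposal is correct and follows essentially the same approach as the paper: start from the residual expansion \eqref{eqn:wk009}, handle the first-order term via \eqref{eqn:thetak}, regroup the double sum $\sum_n(\nr{e_{n+1}}+\nr{e_n})\sum_{j=n}^{k-1}\nr{e_j\gamma_j^k}$ to isolate the two boundary indices and the interior ones (picking up the factor~$2$), then apply \eqref{eqn:lgenm} to the $\nr{e_n}$ factors and \eqref{eqn:lgenm2} (with the Lemma's $j+1\mapsto k$) to the $\nr{e_j\gamma_j^k}$ factors, and finally factor out $\nr{w_k}$. The only cosmetic difference is that the paper performs the regrouping \emph{before} applying the Lemma bounds (its display \eqref{eqn:tgm002}) whereas you describe bounding first and regrouping afterward; the outcome is the same and your identification of the bookkeeping as the only real obstacle is accurate.
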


\begin{remark}
As in Remark \ref{rem:kg1},
if $g$ is linear then $\hat \kappa_g=0$ and the higher-order terms do not appear.

\end{remark}
\begin{remark}\label{rem:tgenm}
Theorem \ref{thm:genm} gives three significant improvements for the higher order terms, 
compared to the results for general $m$ in \cite{EPRX19}.  First, the results above show
\[
\nr{w_{k+1}} \le \bigo(\nr{w_k}^2) + \bigo(\nr{w_k}\nr{w_{k-1}}) 
  + \ldots \bigo(\nr{w_k}\nr{w_{k-m_{k-1}}}),
\]
whereas previous results show
$\nr{w_{k+1}} \le \bigo(\nr{w_k}^2) + \bigo(\nr{w_{k-1}}^2) 
  + \ldots \bigo(\nr{w_{k-m_{k-1}}}^2).$
This helps to explain how the steady Navier-Stokes numerical test of Section \ref{sec:numerics}
is able to converge with very large $m$.

Second, the theorem makes no assumptions on the boundedness of the 
optimization coefficients.  Instead, a more practical assumption is made for 
how close the matrix $F_k$ from the least-squares problem \eqref{eqn:opt-v1}
comes to degeneracy.
Third, similar to the $m=1$ case of Theorem \ref{thm:m1}, Theorem \ref{thm:genm}
shows the higher order terms do not contribute to the residual if there is no gain
from the optimization problem ($\theta_k = 1$). To see this, note 
that each  $h_j(\theta_k)$ in \eqref{eqn:tgenm} has $\sqrt{1-\theta_k^2}$ as a factor, 
so if there is no gain from the optimization problem, 
then all the higher order
terms in \eqref{eqn:tgenm} vanish.

More explicitly,
each $h_j(\theta_k)$ in \eqref{eqn:tgenm} is bounded by 
$C \sqrt{1-\theta_k^2}$ for a constant $C$ (given
in \eqref{eqn:Cdef}, where the factor of $(1-\kappa_g)$ in the denominator can be 
replaced by $\sigma$ for the general case).
Applying these simplifications to  \eqref{eqn:tgenm} shows $\nr{w_{k+1}}$ satisfies
the bound
\begin{align*}
\nr{w_{k+1}} & \le \nr{w_k} \Bigg(
 \theta_k ((1-\beta_{k-1}) + \kappa_g \beta_{k-1})
+ \f{C\hat \kappa_g\sqrt{1-\theta_k^2}}{2}\bigg(
 \nr{w_{k}}h(\theta_{k})
\nonumber \\ &
 + 2  \sum_{n = k-{m_{k-1}}+1}^{k-1} (k-n)\nr{w_n}h(\theta_n) 
 + m_{k-1}\nr{w_{k-m_{k-1}}}h(\theta_{k-m_{k-1}})
 \bigg) \Bigg).
\end{align*}
\end{remark}

The proof of Theorem \ref{thm:genm} follows the same essential outline as 
Theorem \ref{thm:m1}.
In contrast to the technique used in \cite{EPRX19}, 
a direct rather than inductive approach will be taken, 
as the optimization gain (which depends on $m$) appears in both higher and 
lower order terms.

\begin{proof}
The expansion of the residual \eqref{eqn:wk009} from 
Section \ref{sec:resi-exp} shows
\begin{align}\label{eqn:tgm001}
\nr{w_{k+1}}
\le \left((1-\beta_{k-1}) + \kappa_g \beta_{k-1}\right) \nr{\wa_k} 
+ \f{\hat \kappa_g}{2} 
  \sum_{n=k-m_{k-1}}^{k-1} \!\!\!\! \left( \nr {e_{n+1}} + \nr{e_{n}} \right)
\sum_{j = n}^{k-1} \nr{e_j \gamma^k_j}.
\end{align}

Opening up the first sum of \eqref{eqn:tgm001} allows
\begin{align}\label{eqn:tgm002}
&\sum_{n=k-m_{k-1}}^{k-1} \left( \nr {e_{n+1}} + \nr{e_{n}} \right)
\sum_{j = n}^{k-1} \nr{e_j \gamma^k_j}
\nonumber \\& 
= \nr{e_{k-m_{k-1}}}\sum_{j = {k-m_{k-1}}}^{k-1} \nr{e_j \gamma_j^k}
 + 2 \!\!\!\! \sum_{n = k-{m_{k-1}}+1}^{k-1} \!\!\!\!  \nr{e_n} 
      \sum_{j = n}^{k-1} \nr{e_j \gamma^k_j}
  + \nr{e_{k}}\nr{e_{k-1}\gamma_{k-1}^k}.
\end{align}
Applying now \eqref{eqn:lgenm} then \eqref{eqn:lgenm2}, the above \eqref{eqn:tgm002} 
is bounded by
\begin{align}\label{eqn:tgm003}
 & \nr{w_{k}}h(\theta_{k})\nr{e_{k-1}\gamma_{k-1}^k}
 +\! 2 \!\! \sum_{n = k-{m_{k-1}}+1}^{k-1} \!\! \Big( \nr{w_n}h(\theta_n) 
  \sum_{j = n}^{k-1} \nr{e_j \gamma^k_j}\Big)
+ \nr{w_{k-m_{k-1}}}h(\theta_{k-m_{k-1}}) \!\!
  \sum_{j = {k-m_{k-1}}}^{k-1} \nr{e_j \gamma_j^k}
\nonumber \\
 &\le \nr{w_{k}}h(\theta_{k})h_{k-1}(\theta_k)\nr{w_{k}}
 + 2  \sum_{n = k-{m_{k-1}}+1}^{k-1} \Big( \nr{w_n}h(\theta_n) 
  \sum_{j = n}^{k-1} h_j(\theta_k)\nr{w_k} \Big)
\nonumber \\ & 
+ \nr{w_{k-m_{k-1}}}h(\theta_{k-m_{k-1}})
  \sum_{j = {k-m_{k-1}}}^{k-1} h_j(\theta_k)\nr{w_k}.
\end{align}

Putting the bound of \eqref{eqn:tgm003} back into \eqref{eqn:tgm002} then yields
\begin{align*}
\nr{w_{k+1}} & \le \nr{w_k} \Bigg(
 \theta_k ((1-\beta_{k-1}) + \kappa_g \beta_{k-1})
\nonumber \\ &
+ \f{\hat \kappa_g}{2}\bigg(
 \nr{w_{k}}h(\theta_{k})h_{k-1}(\theta_k)
 + 2  \sum_{n = k-{m_{k-1}}+1}^{k-1} \Big( \nr{w_n}h(\theta_n) 
  \sum_{j = n}^{k-1} h_j(\theta_k) \Big)
\nonumber \\ & 
+ \nr{w_{k-m_{k-1}}}h(\theta_{k-m_{k-1}})
  \sum_{j = {k-m_{k-1}}}^{k-1} h_j(\theta_k)
 \bigg) \Bigg),
\end{align*}
hence the result.
\end{proof}

The next corollary gives conditions to assure the monotonic decrease of the residual, 
in the contractive setting.

\begin{corollary}\label{cor:gmcon}
Suppose the hypotheses of Lemma \ref{lem:genm} for $j = k-m, \ldots, k-1$,
and the Lipschitz constant $\kappa_g$ satisfies $\kappa_g < 1$.
Then the following bound holds for the nonlinear residual
$\nr{w_{k+1}}$ generated by 
Algorithm \ref{alg:anderson} with $\beta_k = \beta =1$.
\begin{align}\label{eqn:gmconb}
\nr{w_{k+1}} & \le \nr{w_k} \Bigg\{
 \theta_k \kappa_g 
+ \f{C\sqrt{1+C^2} \sqrt{1-\theta_k^2}\hat \kappa_g}{2}
\nonumber \\ &
\times\bigg( \nr{w_{k}}
 + 2  \sum_{n = k-{m_{k-1}}+1}^{k-1} (k-n)\nr{w_n}
 + m_{k-1}\nr{w_{k-m_{k-1}}} 
 \bigg) \Bigg\},
\end{align}
where
\begin{align}\label{eqn:Cdef}
C = \max\left\{\frac{1}{\sigma c_s} \left(\frac{c_t + c_s}{c_s} \right)^{m_k-1},
C_{F,k+1}\right\}, ~\text{ with } \sigma = (1-\kappa_g).
\end{align}

After the first $m+1$ consecutive iterations $j = k-m, \ldots, k$, 
(assuming here for simplicity that 
$k \ge 2m$, so the subscript on $m$ may be dropped) such that
the following inequality is satisfied
\begin{align}\label{eqn:gmcon}
\nr{w_j} + 2\sum_{n = j-m+1}^{k-1} \nr{w_n} + \nr{w_{j-m}}
< \frac{2(1-\kappa_g)}{C \sqrt{1 + C^2}\hat \kappa_g},
\end{align}
monotonic decrease of the residual is assured.
\end{corollary}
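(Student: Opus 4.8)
The idea is to specialize Theorem~\ref{thm:genm} to the regime $\beta_k=\beta=1$ and $\sigma=(1-\kappa_g)$, then to collapse every $\theta$-dependent coefficient onto the single constant $C$ of \eqref{eqn:Cdef}, and finally to optimize in $\theta_k$. First I would observe that under the reduction $k\ge 2m$ (so that $m_{j}=m$ for every index occurring in \eqref{eqn:tgenm}) all the constants $C_{F,j}$ of \eqref{eqn:cfdef} coincide with $C_{F,k+1}$, and all the constants $C_{j,k}$ of \eqref{eqn:cndef} are bounded by $\tfrac1{\sigma c_s}\big(\tfrac{c_t+c_s}{c_s}\big)^{m-1}$; since $c_s\le 1$ and $\tfrac{c_t+c_s}{c_s}\ge 1$, both families are therefore $\le C$. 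This dominance is the one piece of bookkeeping that has to be done with care.

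Next I would bound each occurrence of $h(\theta_\bullet)=C_{F,\bullet}\sqrt{1-\theta_\bullet^2}+\theta_\bullet$ by its maximum $\sqrt{1+C_{F,\bullet}^2}\le\sqrt{1+C^2}$ over $\theta_\bullet\in[0,1]$ (the Cauchy--Schwarz maximization already used in Corollary~\ref{cor:m1con}), and each $h_j(\theta_k)=C_{j,k}\sqrt{1-\theta_k^2}$ by $C\sqrt{1-\theta_k^2}$. The inner sums in \eqref{eqn:tgenm} then collapse to $\sum_{j=n}^{k-1}h_j(\theta_k)\le(k-n)\,C\sqrt{1-\theta_k^2}$ and $\sum_{j=k-m}^{k-1}h_j(\theta_k)\le m\,C\sqrt{1-\theta_k^2}$; factoring the common $C\sqrt{1+C^2}\sqrt{1-\theta_k^2}$ out of the bracket of \eqref{eqn:tgenm}, while keeping the first-order term $\theta_k\kappa_g\nr{w_k}$ untouched, yields exactly \eqref{eqn:gmconb}.

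For the monotonicity statement I would write $A$ for the bracketed residual sum in \eqref{eqn:gmconb} multiplied by $\tfrac{C\sqrt{1+C^2}\hat\kappa_g}{2}$, so that $\nr{w_{k+1}}\le\nr{w_k}\big(\theta_k\kappa_g+A\sqrt{1-\theta_k^2}\big)\le\nr{w_k}\sqrt{\kappa_g^2+A^2}$ after maximizing in $\theta_k$ exactly as above. Hence $\nr{w_{k+1}}<\nr{w_k}$ whenever $A<\sqrt{1-\kappa_g^2}$, and since $\sqrt{1-\kappa_g^2}\ge 1-\kappa_g$ it is enough that the residual sum fall below $2(1-\kappa_g)/\big(C\sqrt{1+C^2}\hat\kappa_g\big)$. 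I would then run the induction: assuming the residuals have already decreased over the previous $m$ stages, the $(k-n)$- and $m$-weighted sum appearing in $A$ is controlled by a rearrangement of the unweighted sum in \eqref{eqn:gmcon}, so that if \eqref{eqn:gmcon} holds at the $m+1$ consecutive stages $j=k-m,\dots,k$ one obtains $\nr{w_{k+1}}<\nr{w_k}$, which both propagates the induction and gives monotone decrease from that point on.

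The step I expect to be the main obstacle is this last reconciliation: converting the stage-dependent, $(k-n)$-weighted sum that the residual expansion naturally produces into the clean threshold \eqref{eqn:gmcon}. It forces the argument to be genuinely inductive (one needs the already-established monotonicity to re-bound the intermediate residuals), and it is where the constants must be made to line up; by contrast, the passage from \eqref{eqn:tgenm} to \eqref{eqn:gmconb} is routine once $C$ is seen to dominate all the intermediate constants.
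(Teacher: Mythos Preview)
Your proposal follows the paper's proof essentially line for line: bound each $h_j(\theta_k)$ by $C\sqrt{1-\theta_k^2}$, bound each $h(\theta_j)$ by $\sqrt{1+C^2}$, collapse \eqref{eqn:tgenm} to \eqref{eqn:gmconb}, then maximize $\theta_k\kappa_g+A\sqrt{1-\theta_k^2}$ by $\sqrt{\kappa_g^2+A^2}$ and invoke the induction over $m+1$ consecutive stages. Your observation that the passage from the $(k-n)$-weighted sum in \eqref{eqn:gmcor002} to the unweighted threshold \eqref{eqn:gmcon} is the delicate point is well taken---the paper dispatches that step in a single sentence, so your more explicit inductive accounting (and your use of $1-\kappa_g\le\sqrt{1-\kappa_g^2}$ to pass from the natural threshold to the stated one) is, if anything, more careful than the original.
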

The proof follows similarly to the $m=1$ case in Corollary \ref{cor:m1con}, 
with the additional steps of bounding the two types of $h$ coefficients.

\begin{proof}
For each $\beta_j = 1$ and $\sigma= 1-\kappa_g$, 
as in Remark \ref{rem:tgenm} the coefficients $h_n(\theta_k)$ are each bounded by 
$C \sqrt{1-\theta_k^2}$, with $C$ given by \eqref{eqn:Cdef}.  
Applying this to \eqref{eqn:tgenm} allows
\begin{align}\label{eqn:gmcor001}
\nr{w_{k+1}} & \le \nr{w_k} \Bigg(
 \theta_k\kappa_g
+ \f{C \hat \kappa_g \sqrt{1 - \theta_k^2}}{2}\bigg(
 \nr{w_{k}}h(\theta_{k})
\nonumber \\ &
 + 2  \sum_{n = k-{m_{k-1}}+1}^{k-1} 
(k-n)\nr{w_n}h(\theta_n)
 + m_{k-1}\nr{w_{k-m_{k-1}}}h(\theta_{k-m_{k-1}})
 \bigg) \Bigg).
\end{align}
The coefficients $h(\theta_j)$ are each bounded by 
$C \sqrt{1-\theta_j^2} + \theta_j \le \sqrt{1+C^2}$. Applying this to 
\eqref{eqn:gmcor001} yields \eqref{eqn:gmconb}. 

Maximizing in terms of $\theta_k$, the square of the 
bracketed terms on the right hand side
of \eqref{eqn:gmconb} is bounded by
\begin{align}\label{eqn:gmcor002}
\kappa_g^2 + \f{C^2(1+C^2) \hat \kappa_g^2}{4}
\left(\nr{w_k} + 2 \sum_{n = k-m+1}^{k-1}(k-n)\nr{w_n} + m \nr{w_{k-m}}\right)^2.
\end{align}
Setting \eqref{eqn:gmcor002} less than one implies $\nr{w_{k+1}} < \nr{w_k}$ 
under the condition \eqref{eqn:gmcon}. Satisfaction of $\nr{w_{j+1}}< \nr{w_j}$ 
for $m+1$ consecutive iterates $j = k-m, \ldots, k$, then implies reduction in 
every subsequent residual.
\end{proof}

\subsection{Practical guidance based on the theory}\label{sec:practical}
The results of Theorems \ref{thm:m1} and \ref{thm:genm} and Corollaries 
\ref{cor:m1con} and \ref{cor:gmcon} indicate that the most effective choice of algorithmic
depth $m = m_k$
may be to have it increase through the simulation based on the three following regimes.
The different regimes below can depend on the scaling of
data and choice of initial iterates.
Damping (not explicitly discussed here; see for instance
\cite{EPRX19}) may be necessary to see a reduction in the first order residual terms
in noncontractive settings, particularly in the initial regime.
It is assumed here that the problem dimension $n$ is significantly larger than the number
of iterations allowed to solve the problem, and a ``large'' value of $m_k$ is still small 
compared with $n$. The following 3-phase and 2-phase approaches are demonstrated in
the numerical experiments that follow.

\subsubsection{3-phase approach}\label{subsubsec:3phase}
This approach is appropriate for problems where the
initial residual is large or poorly scaled, 
such that an accumulation of higher-order terms can cause lack of convergence, 
or even an overflow. 
This method is demonstrated on the $p$-Laplacian in Section \ref{sec:numerics}.
\begin{itemize}
\item Initial regime: The residual $w_k$ and difference between iterates $e_k$ may be
large (in norm), for instance $\bigo(1)$ or greater.  
The depth $m_k$ should be chosen small (for instance between 0 and 2),
as the accumulation of higher order terms on the right-hand side of \eqref{eqn:tgenm}
({\em cf.} \eqref{eqn:wk008})
can prevent convergence. Additionally, as shown in \eqref{eqn:m1-thm} of 
Theorem \ref{thm:m1} and \eqref{eqn:tgenm} of Theorem \ref{thm:genm}, 
a more successful optimization gives greater weight to the higher-order terms.

\item Pre-asympototic regime. The residual or difference between iterates is on the order
of $10^{-1}$ or $10^{-2}$. The depth $m_k$ can safely be increased roughly 
logarithmically with  $\nr{w_k}$, either to convergence tolerance or 
until the asymptotic regime is reached. 

\item Asympototic regime. The residual is sufficiently small so that the higher-order 
terms of \eqref{eqn:tgenm} are negligible, regardless of their scaling with respect to
the optimization gain $\theta_k$.
The depth $m_k$ can be increased, but should be only to the point that it has
an impact on decreasing the gain $\theta_k$.  
\end{itemize}

Notably, in the results shown below for the steady Navier-Stokes equations,
simply choosing a large depth $m$ (by which $m_k = k-1$, either up to some maximum
$m$, or up to convergence) can be effective in the case that the initial residual
is not greater than $\bigo(1)$, and drops sufficiently rapidly through the initial
iterations.  This is essentially the strategy above, starting in the pre-asymptotic,
rather than the initial regime.  In this example it is also demonstrated that switching
to Newton iterations upon sufficient decay of the residual can yield rapid convergence.

\subsubsection{2-phase approach} This method is appropriate for problems with a 
moderately scaled initial residual, on the order of $\bigo(1)$, and
is demonstrated in Section \ref{sec:numerics} on a nonlinear Helmholtz equation.
\begin{itemize}
\item Pre-asymptotic regime. The depth $m_k$ is kept at a small to moderate value
(2 to 5), until the residual drops below a given threshold, on the order of 
$10^{-2}$ or $10^{-3}$.
\item Asymptotic regime. The depth $m_k$ is increased to a higher steady level,
for instance $m_k = 10$.  This allows smaller factors of the optimization gain
$\theta_k$ due to a better solution of the least-squares problem at the point
where the residual is small enough that the increased weight and accumulation of
higher-order terms does not interfere with convergence.
\end{itemize}

\subsubsection{Safeguarding and verification of \eqref{eqn:gmcond} 
on sufficient linear independence of
the columns of $F_k$}\label{subsubsec:enforce}
It makes sense both numerically and theoretically to solve the least squares problem
at each stage by means of an economy QR decomposition.  
Given the large number of degrees of freedom $n$ in comparison to the algorithmic
depth $m$, forming the decomposition and solving the least squares system  has 
a negligible effect on total solution time; in each of our examples given below, the
total runtime is dominated by the linear system solves.
Let $F_k = \hat Q \hat R$, where $\hat R$ is $m_k \times m_k$. 
Denoting the columns of $F_k$ by $\{v_1, \ldots, v_{m_k}\}$,
Proposition \ref{prop:diag-entries} shows
the diagonal values of $\hat R$ are given by 
$r_{ii} = \nr{v_i}|\sin(v_i,\spa\{v_1, \ldots, v_{i-1}\}|)$, 
by which 
$|\sin(v_i,\spa\{v_1, \ldots, v_{i-1}\})| = r_{ii}/\nr{v_i}$.
If a practitioner wishes to enforce \eqref{eqn:gmcond}, any column $i$ for which this
quantity falls below a given threshold may be removed from $F_k$, (and accordingly
from $E_k$) and the QR decomposition recalculated (or dynamically updated), 
by which \eqref{eqn:gmcond} is satisfied in accordance with that threshold
({\em cf.} the safeguarding strategy introduced in \cite[Section 2.1.2]{PoSc20} 
for AA with $m=1$ applied to Newton iterations).  
The method is well defined for any threshold value in $c_T \in [0,1)$ providing no 
safeguarding for $c_T = 0$ and otherwise enforcing $c_s = c_T$.
This method ensures the most recent column of $F_k$ is used since
$r_{11}/\nr{v_i} = 1$.
This strategy is demonstrated below in Section \ref{sec:numerics} 
on a finite element discretization of the $p$-Laplace equation, with $p$ close to one.

This safeguarding strategy may be compared with that used in \cite[Section 4]{WaNi11} and 
\cite{YMLW09}, in which the
condition of $F_k$ is monitored by the condition of $\hat R$ (which is the same), and the
oldest column of $F_k$ is dropped if the condition exceeds a subscribed threshold.
The main difference is the present method allows the efficient numerical determination of
which column(s) to drop, yielding a theoretically sound update to this older 
heuristic method.
An alternate strategy based on monitoring the condition number of $\hat R$, as suggested
in \cite{FaSa09}, is to 
compute the singular value decomposition (SVD) of $\hat R$, and compute the 
least-squares solution using the pseudoinverse of the truncated expansion to preserve 
the condition (see \cite{chan82,sidi16}).  
However, as shown in the numerical examples of \cite[Section 3.1-3.2]{ToKe15}, 
the deteriorating condition of the least-squares matrix does not necessarily 
interfere with convergence. 

\section{Numerical Experiments}\label{sec:numerics}
In this section, the following test problems will illustrate the theory 
and the practical guidance given above demonstrating both safeguarding
and dynamic depth selection stragies, 
extend the AA methodology to a new application in the nonlinear Helmholtz equation, 
and improve on existing results for AA applied to the steady Navier-Stokes equations.

\subsection{$p$-Laplace equation} 

The $p$-Laplace (or $p$-Poisson) equation arises in many physical 
applications, including non-Newtonian flows {\em e.g.,} in glaciology; turbulent
flows, and flows in porous media; see \cite{DiTh94,GlRa03,DFTW20}.
The elliptic $p$-Laplace equation which is given by 
\[
-\div \left( (|\grad u|^2/2)^{(p-2)/2} \grad u\right) = f,
\]
is degenerate  
for $p > 2$ and singular for $1 < p < 2$.  
In \cite{EPRX19}, the $p$-Laplace equation with $p > 2$ is used to demonstrate an 
approach to adaptively updating damping factors $\beta_k$, and it is used as a 
benchmark problem to demonstrate preconditioned nonlinear solvers in \cite{BKST15}.
For this example, consider a regularized version in the singular regime
\begin{align}\label{eqn:pLap-strong}
-\div\left( \left(\eps^2 + \f 1 2 |\grad u|^2\right)^{(p-2)/2} \grad u \right) = c,
\end{align}
with $\eps = 10^{-14}$, $p = 1.06$ ({\em cf.} \cite{DFTW20}), and $c = \pi$,
over domain $(0,2) \times (0,2)$,
subject to homogeneous Dirichlet boundary conditions 

The results shown below use a $P_1$ finite element discretization over a
$256 \times 256$ uniform triangulation of the domain, which
produces a discrete nonlinear problem with 66,049 degrees of freedom.  
The simulations were run using a
Python implementation of the FEniCS finite element library, \cite{fenics2015}.
Each simulation was started from initial iterate
$u_0 = xy(x-1)(y-1)(x-2)(y-2)$, ({\em cf.,} \cite{BKST15})
and run to a residual tolerance of $\nr{w_k} < 10^{-10}$, where and $l_2$ norm is 
used for both the convergence tolerance and the optimization.

A Picard (fixed-point) iteration for the $P_1$ finite element
discretization of the variational form of \eqref{eqn:pLap-strong} is given by:
Find $u_k\in V_h$ satisfying for all $v\in V_h$
\begin{align} \label{eqn:pLap1}
\int_\Omega \left(\eps^2 + \f 1 2 |\grad u_{k-1}|^2\right)^{(p-2)/2}\grad u_k\cdot
\grad v \dd x = \int_\Omega cv \dd x,
\end{align}
where $V_h$ is the space of continuous piecewise linear functions
that vanish on the boundary.

With the given parameters, the defined fixed-point operator is not globally contractive, 
but the Picard iteration does converge essentially linearly as it approaches the solution.
Here, AA is applied with $m_k = \min\{k-1,m\}$ for $m = 0,2,4,8$, and these results are
compared with dynamically updating the depth $m_k$ by defining
\begin{align}\label{eqn:3phase}
\tilde m_k \coloneqq \ceil(-\log_{10} \nr{w_{k+1}}), \quad 
\text{ and }~  m_k = \psi_{n,N}(\tilde m_k) \coloneqq 
\left\{\begin{array}{cc}
n, & \tilde m_k \le n \\ 
\tilde m_k, & n < \tilde m_k < N \\
N, & \tilde m_k \ge N
\end{array} \right. .
\end{align} 
Setting the depth by \eqref{eqn:3phase} implements the 3-phase approach described in
Subsection \ref{subsubsec:3phase}.
Additionally, results are shown for enforcing the sufficient linear-independence
condition \eqref{eqn:gmcond} holds, with threshold
$c_s = 0.25$, by the method described in Subsection \ref{subsubsec:enforce}. 
Runs where this condition is enforced are denoted as safeguarded (SG) in Figures
\ref{fig:pLap246} and \ref{fig:pLap018}, below. 

\begin{figure}[H]
\begin{center}
\includegraphics[trim = 0pt 0pt 10pt 10pt, clip=true, width = .49\textwidth]
{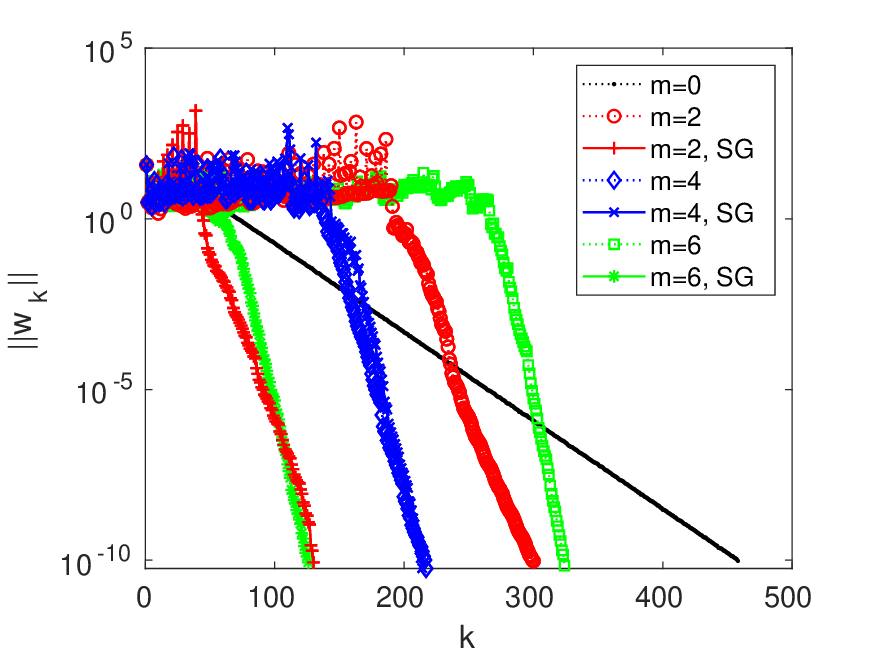}
~\includegraphics[trim = 0pt 0pt 10pt 10pt, clip=true, width = .49\textwidth]
{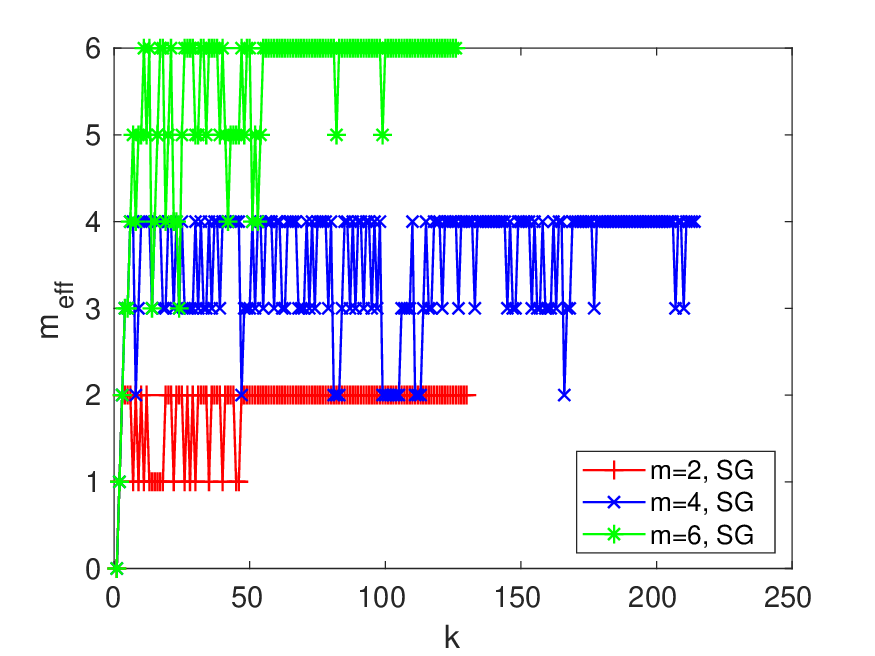}
\caption{Left: residual history for AA applied to \eqref{eqn:pLap1} with depths
$m = \{0,2,4,6\}$, with and without safeguarding (SG), which selects columns of $F_k$
to enforce \eqref{eqn:gmcond}, with threshold $c_s = 0.25$. Right: $m_{\text{eff}}$, 
the number of columns of $F_k$ selected at each stage in the safeguarded simulations.}
\label{fig:pLap246}
\end{center}
\end{figure}

\begin{figure}[H]
\begin{center}
\includegraphics[trim = 0pt 0pt 10pt 10pt, clip=true, width = .49\textwidth]
{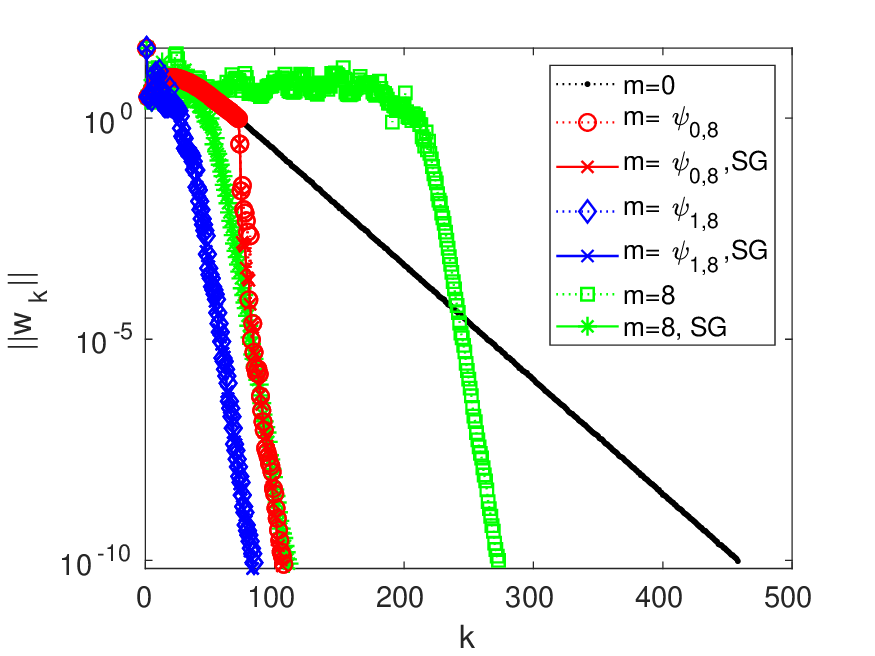}
~\includegraphics[trim = 0pt 0pt 10pt 10pt, clip=true, width = .49\textwidth]
{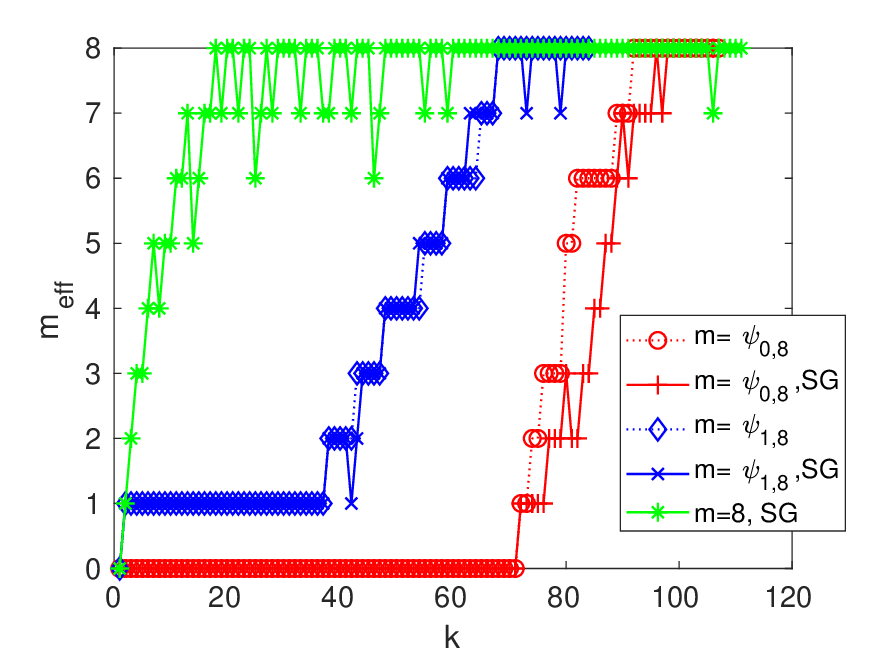}
\caption{Left: residual history for AA applied to \eqref{eqn:pLap1} with depths 
$m = \{0,8\}$, and dynamically selected depths $\psi_{0,8}$ and $\psi_{1,8}$ as 
given by \eqref{eqn:3phase}, with and without safeguarding (SG), 
which selects columns of $F_k$ to enforce \eqref{eqn:gmcond}, with threshold 
$c_s = 0.25$. Right: $m_{\text{eff}}$, 
the number of columns selected at each stage of the simulations.}
\label{fig:pLap018}
\end{center}
\end{figure}
Figure \ref{fig:pLap246} (left) shows residual histories for AA with constant depths
$m = \{0,2,4,6\}$, where $m= 0$ (the Picard iteration) is included for reference.
For $m= \{2,6\}$, the safeguarding procedure reduces the number of iterations from 
over 300 to just over 100.  This indicates that a near-linear dependence in the 
columns of $F_k$ produces an undesirable accelerated step toward the beginning of
those iterations. The safeguarding is seen to have little effect on any of the iterations
once they begin their rapid convergence. It also has little effect on the simulation with
$m=4$, indicating that near-linear dependence is not always an issue. The right
plot of Figure \ref{fig:pLap246} shows the number of columns of $F_k$ 
(denoted $m_{\text{eff}}$) selected for use by the safeguarding strategy. It is 
interesting to notice how for $m= \{2,6\}$, columns are deleted often toward the 
beginning of the simulations.  For $m=4$, columns are deleted throughout, but with 
little effect: the columns deleted caused little harm, but also
contributed little to the convergence.

Figure \ref{fig:pLap018} (left) shows residual histories using a constant depth of
$m=8$, together with the 3-phase dynamic depth selection strategy suggested in 
Subsection \ref{subsubsec:3phase}, and given by \eqref{eqn:3phase}, with depths
ranging from 0 to 8 using $\psi_{0,8}$ and from 1 to 8 with $\psi_{1,8}$.  
In this figure it is observed that the safeguarding has little effect on convergence
when it is used together with the dynamic depth selection; 
however, it has a substantial effect
on convergence for the constant depth $m=8$.  The right plot of Figure
\ref{fig:pLap018} shows the early intervention in removing certain columns from 
$F_k$ when $m_k = \min\{k-1,8\}$ leads to the fast convergence seen on the left.  
In contrast, while the safeguarding strategy does remove columns from $F_k$ periodically
using the dynamic $\psi_{1,8}$ and particularly for $\psi_{0,8}$, it leads to very little
change in the convergence histories in either case. 

This example shows that either dynamic depth selection or safeguarding can lead to 
improved convergence of AA.  The early stages of simulations, particularly if they
are started with poor initial iterates as is the case here, can be sensitive
to choice of depth without such interventions. Combining the two strategies did not
lead to a noticeable advantage or disadvantage.

\subsection{Nonlinear Helmholtz equation}
The following 1D nonlinear Helmholtz (NLH) equation, arises in nonlinear optics and 
describes the propagation of continuous-wave laser beams through transparent dielectrics.
Following the formulation from \cite{BFT07}, the system may be written as : 
Find $u:[0,10]\rightarrow \mathbb{C}$ satisfying
\begin{align*}
\frac{d^2 u}{dx^2} + k_0^2 \left( 1 + \epsilon(x) |u|^2 \right) u & = 0, \ \ \ 0<x<10, \\
\frac{du}{dx} + i k_0 u & = 2 i k_0,\ \ \ x=0,\\
\frac{du}{dx} - i k_0 u & = 0,\ \ \ x=10.
\end{align*}
Here, $\epsilon(x)$ is a given non-negative function of $x$ representing a material 
constant at each point in space, and $k_0$ is the linear wave number in the surrounding 
medium.  For simplicity, $\epsilon(x)=\epsilon$ is taken as a non-negative constant.

Even in 1D with constant $\eps(x) > 0$, 
this is a very challenging problem, especially for larger values of $\epsilon$ and 
$k_0$, each of which increases the effect of the cubic nonlinearity.
The system is discretized by applying a second order finite difference 
method (with uniform point spacing of $h=0.01$) to the iteration
\begin{align}
\frac{d^2 u_{j+1}}{dx^2} + k_0^2 \left( 1 + \epsilon |u_j|^2 \right) u_{j+1} & = 0, \ \ \ 0<x<10, \label{nlh1} \\
\frac{du_{j+1}}{dx} + i k_0 u_{j+1} & = 2 i k_0,\ \ \ x=0, \label{nlh2} \\
\frac{du_{j+1}}{dx} - i k_0 u_{j+1} & = 0,\ \ \ x=10. \label{nlh3}
\end{align}
This can be considered a fixed point iteration $u_{j+1}=g(u_j)$, 
with $g$ defined to be the solution operator of the (discretized) system 
\eqref{nlh1}-\eqref{nlh3}.  
Following \cite{BFT07}, $u_0=e^{ik_0 x}$ is used as the initial iterate.
\begin{figure}[ht!]
\begin{center}
\includegraphics[width = .45\textwidth, height=.40\textwidth,viewport=0 0 530 400, clip]{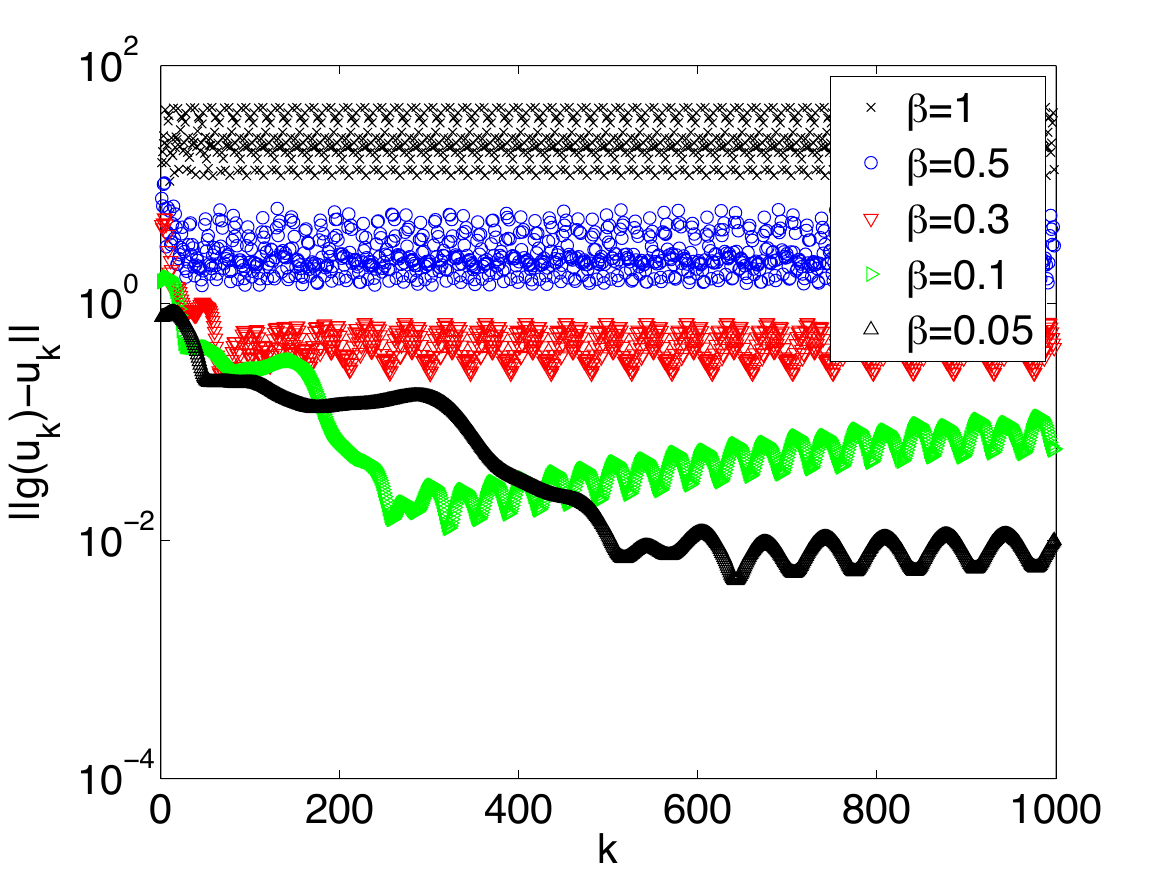}
\caption{Results of the NLH test with $\epsilon=0.22$ and $m=0$ demonstrating that
decreasing a fixed damping factor $\beta$ does not induce convergence of the 
fixed-point iteration.}
\label{eps22beta} 
\end{center}
\end{figure}

\begin{figure}[ht!]
\begin{center}
\includegraphics[width = .45\textwidth, height=.40\textwidth,viewport=0 0 530 400, clip]{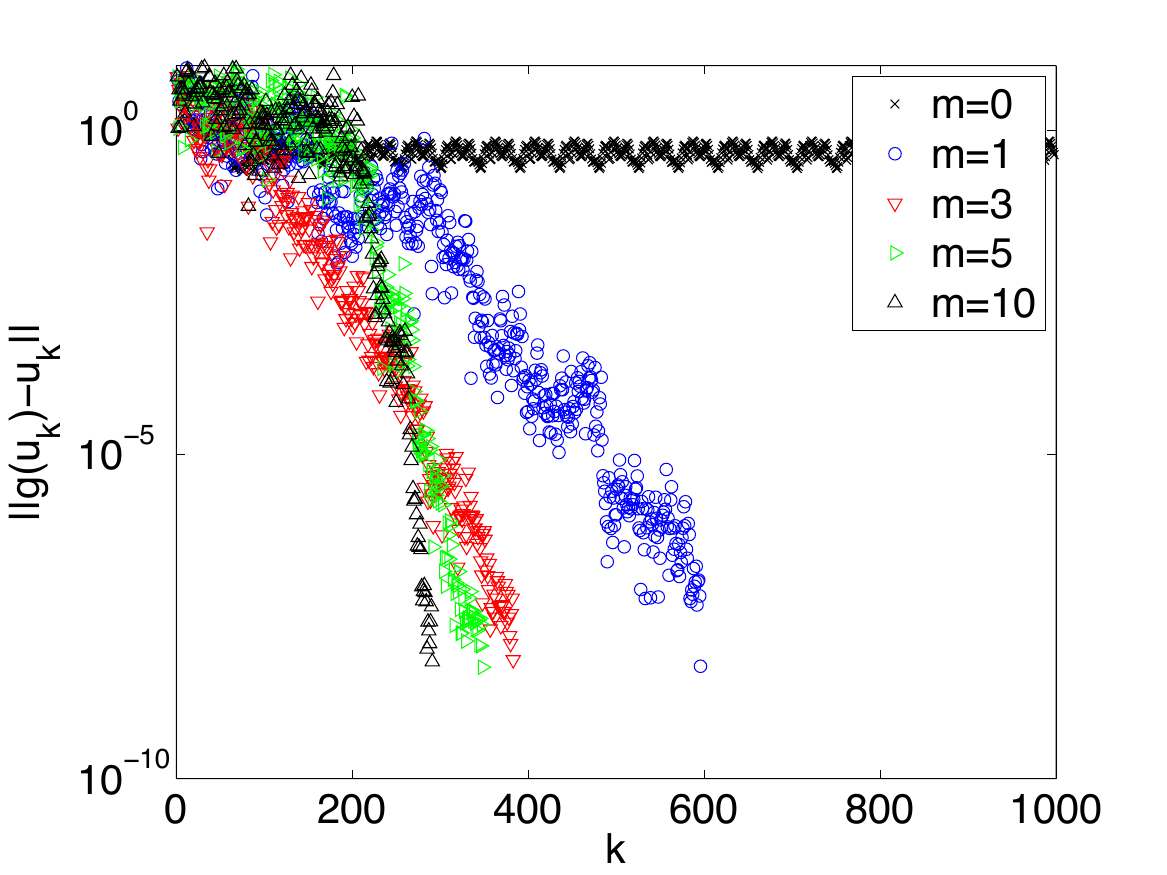}
\includegraphics[width = .45\textwidth, height=.40\textwidth,viewport=0 0 530 400, clip]{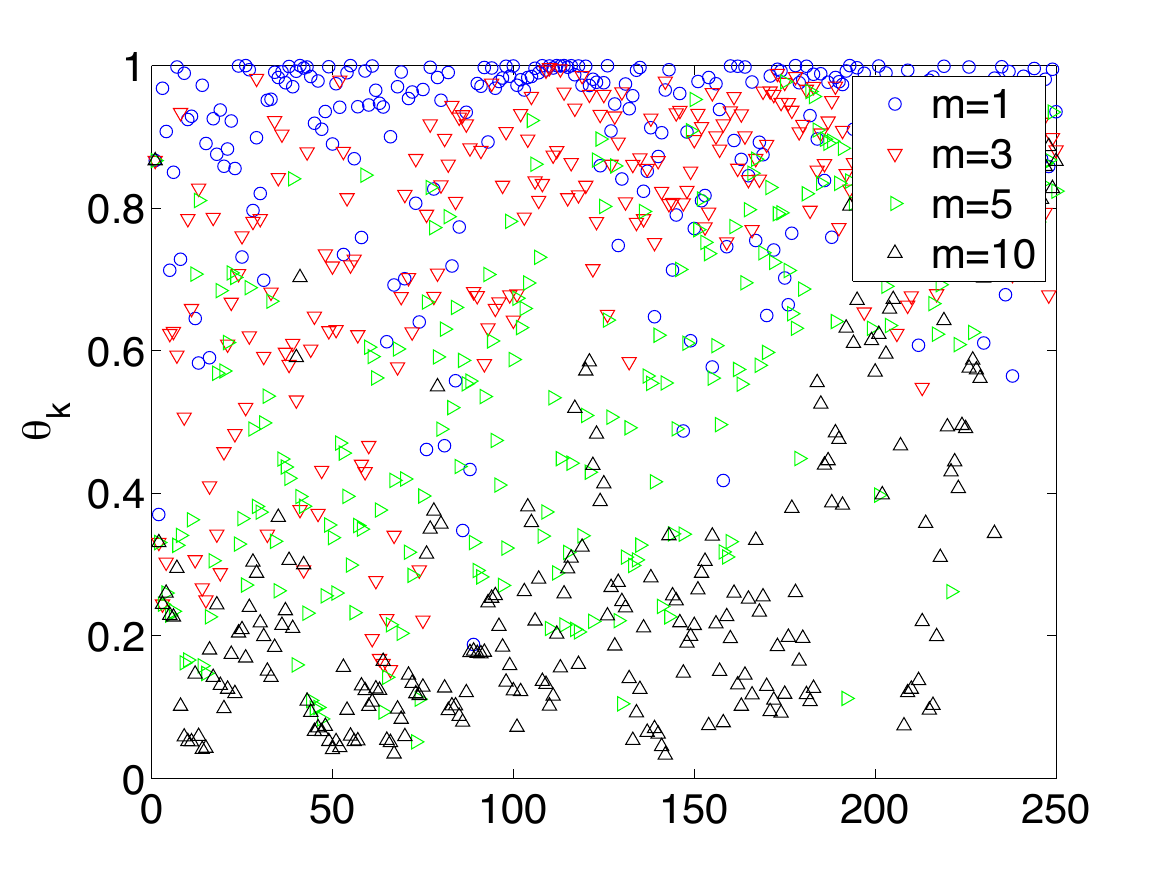}
\caption{Results of the NLH test with $\epsilon=0.22$, as convergence of the nonlinear residual (left) for $\beta_k=\beta=0.3$ and varying $m$, and $\theta_k$ for varying $m$ (right).}
\label{eps22conv} 
\end{center}
\end{figure}

This NLH test uses $\epsilon=0.22$, 
for which the fixed point iteration \eqref{nlh1}-\eqref{nlh3} does not converge.  
Figure \ref{eps22beta} shows the fixed-point iteration $(m=0)$ with varying levels of 
relaxation (damping); this illustrates that (uniform) relaxation alone is
not sufficient for convergence.  
In Figure \ref{eps22conv}, results of AA applied to the iteration using relaxation 
parameter $\beta_k=\beta=0.3$ are shown for $m = 1,3,5,10$, all of which converge.
The plot of $k$ {\em vs.} $\theta_k$ shows  a clear reduction in
gain factors $\theta_k$ as the depth $m$ increases.  
Comparing convergence histories for varying depths $m$, none of the depths tested
show monotonic decrease, particularly in the preasymptotic regime. Depth
$m=10$ which becomes nearly monotone in the asymptotic regime, has gain values 
generally less than $0.6$; whereas depth $m=1$ which is far from monotone has 
gain values that return to nearly one throughout the first 250 iterations shown 
in Figure \ref{eps22conv}, on the right.

\begin{figure}[ht!]
\begin{center}
\includegraphics[width = .45\textwidth, height=.40\textwidth,viewport=0 0 530 400, clip]{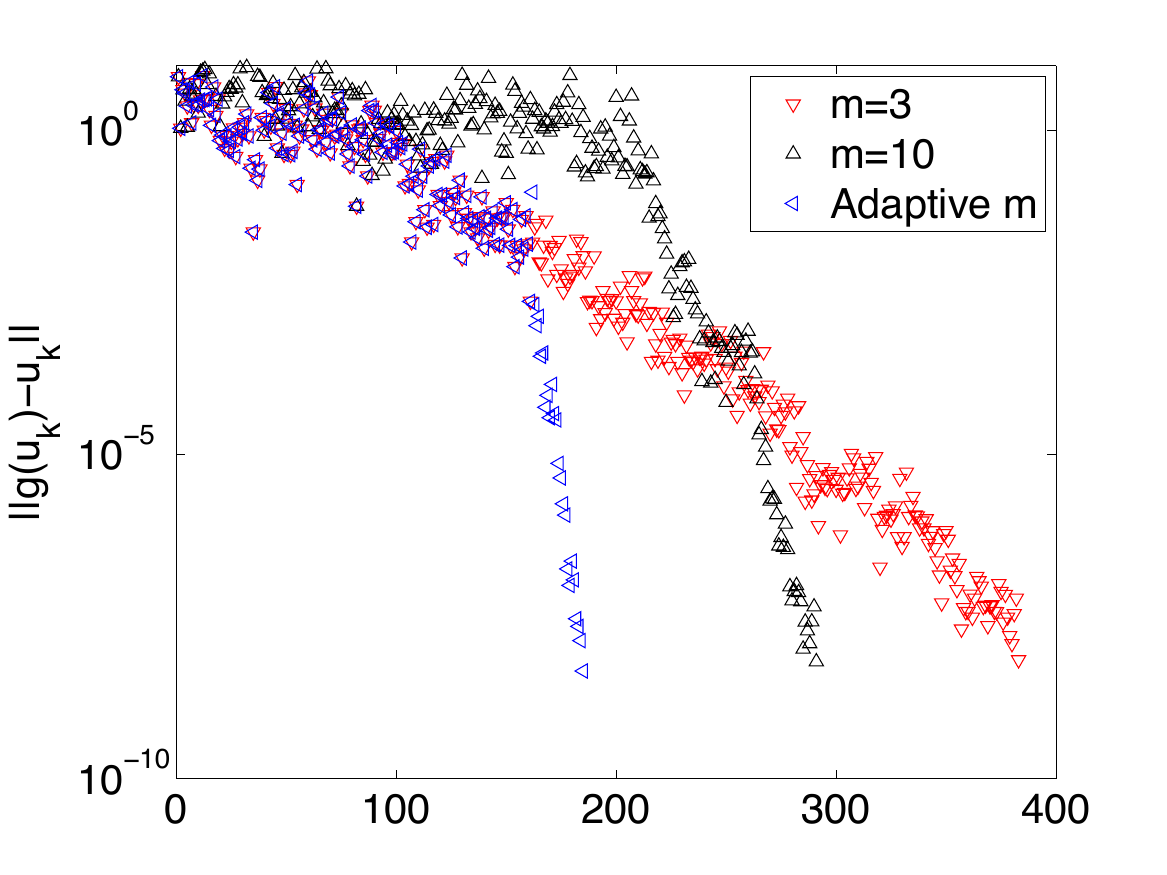}
\caption{Results of the NLH test with $\epsilon=0.22$, as convergence of the nonlinear residual (left) for $\beta_k=\beta=0.3$ and $m=3$, $m=10$, and an heuristic
strategy where $m=3$ at first but switches to $m=10$ once the nonlinear residual is sufficiently small.}
\label{eps22adapt} 
\end{center}
\end{figure}

The next results, shown in Figure \ref{eps22adapt}, 
use a heuristic strategy for updating $m$.
This strategy is based on the observation that
depth $m=3$ gives a faster initial decrease in the residual, and $m=10$ gives
the fastest eventual decrease. 
Here, depth $m_k$ is switched from $m_k = 3$ to $m_k=10$ on the condition
of a sufficiently small residual, where the tolerance is set at $0.005$.
The depth-switching approach yields substantially faster convergence than either 
constant-depth strategy.
This is again consistent with the theory, as larger higher order terms play a 
greater role earlier in the iteration history, and moreso at greater algorithmic 
depths. Once the higher order terms are sufficiently small, (attained through
a sequence of sufficiently small gain values), the decrease
in gain $\theta_k$ 
for greater depths $m$ yields better performance as the residual is small
enough to be dominated by the first order term even as the number and weight of the 
higher order terms increase.

\subsection{3D Steady Navier-Stokes equations}

The last example shown is for the 3D driven cavity benchmark test problem for 
the steady Navier-Stokes equations (NSE).  
The steady NSE are given in a domain $\Omega\subset \mathbb{R}^d$  (d=2,3) by
\begin{eqnarray}
u \cdot\nabla u + \nabla p - \nu\Delta u  & = & f, \label{ns1} \\
\nabla \cdot u & = & 0, \label{ns2} \\
u|_{\partial\Omega} & = & s, \label{ns3} 
\end{eqnarray}
where $\nu$ is the kinematic viscosity which is inversely proportional to the Reynolds 
number $Re:=\nu^{-1}$, $f$ is a forcing term, and $u$ and $p$ represent 
velocity and pressure.  The NSE are well-known to be more difficult to solve with
larger Reynolds number.

The 3D driven cavity is a widely studied benchmark problem for the NSE, and typically 
with $Re\le 1000$ (see \cite{WB02} and reference therein).  
For this problem, $\Omega=(0,1)^3$, and there is no forcing ($f=0$). For boundary 
conditions, $s=0$ is enforced on the bottom and sides, and on the top, 
$s=\langle 1,0,0\rangle^\top$, by which the driving force is provided by the 
moving lid.
Recently, higher $Re$ have been considered, but as a {\it time dependent flow}, in an 
attempt to find the first Hopf bifurcation where the flow becomes oscillatory and will 
not converge to a steady state \cite{CPHGG16,FG10}. 
This bifurcation appears to occur around $Re\approx 2000$.  
Here, the system \eqref{ns1}-\eqref{ns3} is solved 
by applying AA to the Picard iteration, given by \cite{GR86} as
\begin{eqnarray}
u_k \cdot\nabla u_{k+1} + \nabla p_{k+1} - \nu\Delta u_{k+1}  & = & f, \label{p1} \\
\nabla \cdot u_{k+1} & = & 0, \label{p2} \\
u_{k+1}|_{\partial\Omega} & = & s. \label{p3} 
\end{eqnarray}
The system above defines a fixed-point iteration with $u_{k+1}=g(u_k)$, 
where $g$ is the solution operator for a spatial discretization of \eqref{p1}-\eqref{p3}. 
The system is discretized using $(P_3,P_2^{disc})$ Scott-Vogelius finite elements on a 
barycenter refined tetrahedral mesh that provides 1.3 million total degrees of freedom.  
The tetrahedral mesh was created using a first a box mesh to subdivide
all axes using Chebyshev points (to be more refined near the boundary), then 
splitting each box into 6 tetrahedra, then splitting each tetrahedron  
with a barycenter refinement.  
The initial guess for each of the  NSE tests is $u_0=0$
(no continuation methods are applied).

\begin{figure}[ht!]
\begin{center}
\includegraphics[width = .98\textwidth, height=.3\textwidth,viewport=0 0 1100 350, clip]{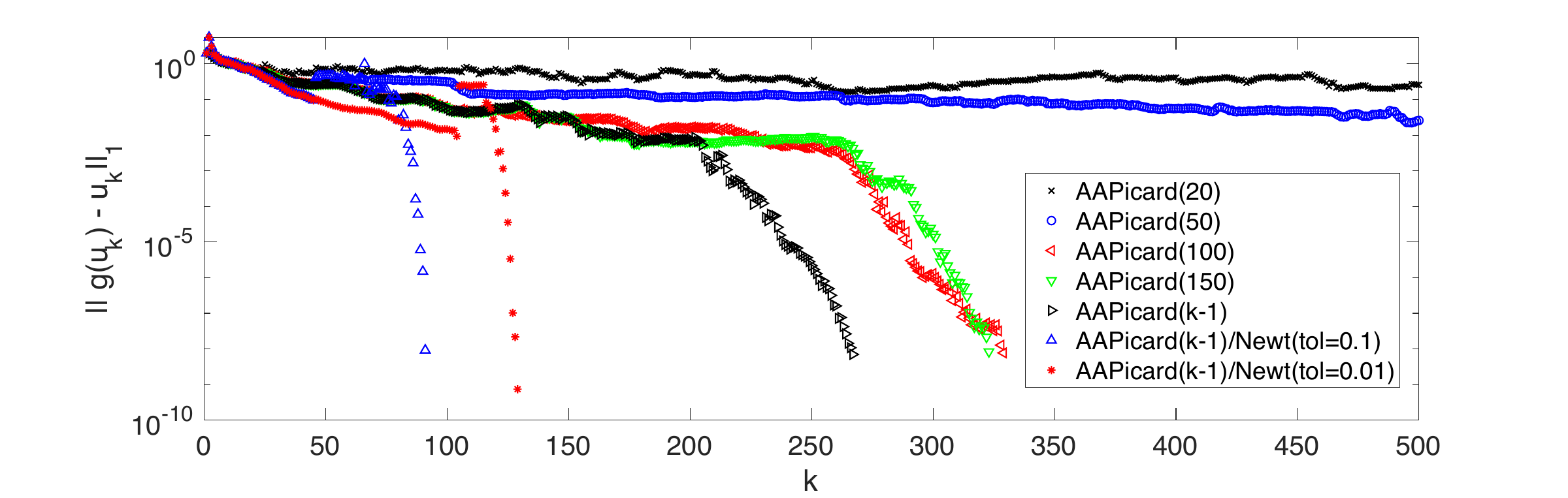}
\includegraphics[width = .98\textwidth, height=.3\textwidth,viewport=0 0 1100 350, clip]{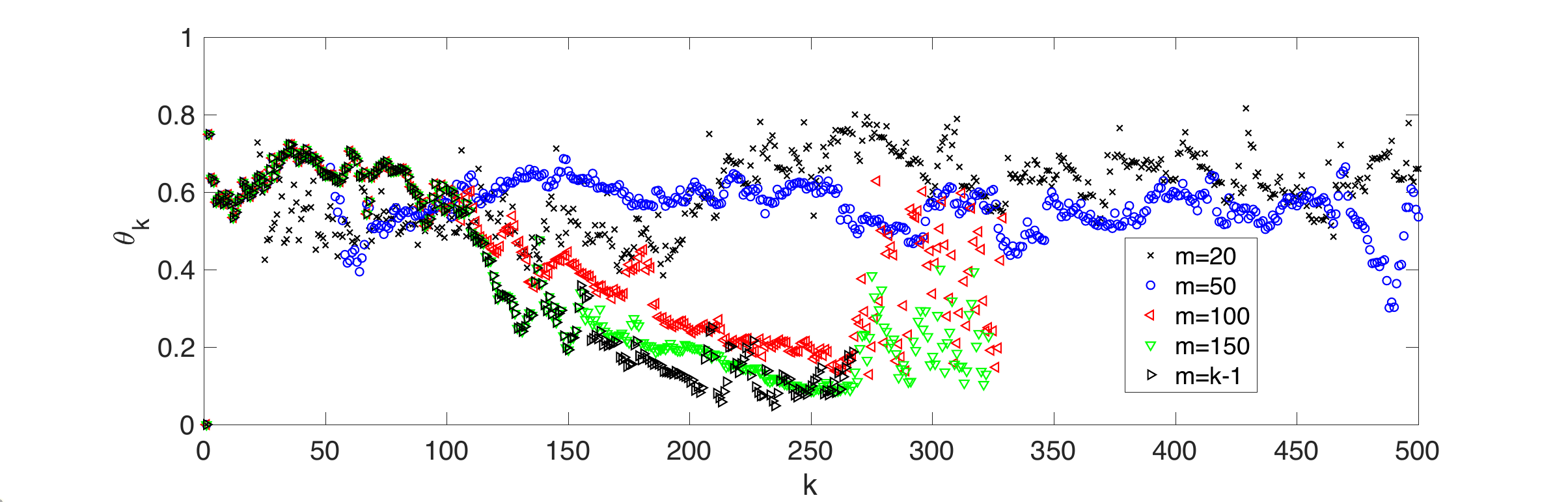}
\caption{Top: convergence of AAPicard with varying $m$ with and without a switch to 
Newton. Bottom: gain factors $\theta_k$ for varying $m$.}
\label{nseplots} 
\end{center}
\end{figure}

\begin{figure}[ht!]
\begin{center}
$Re=2500$\\
\includegraphics[width = \textwidth, height=.3\textwidth,viewport=50 0 1150 350, clip]{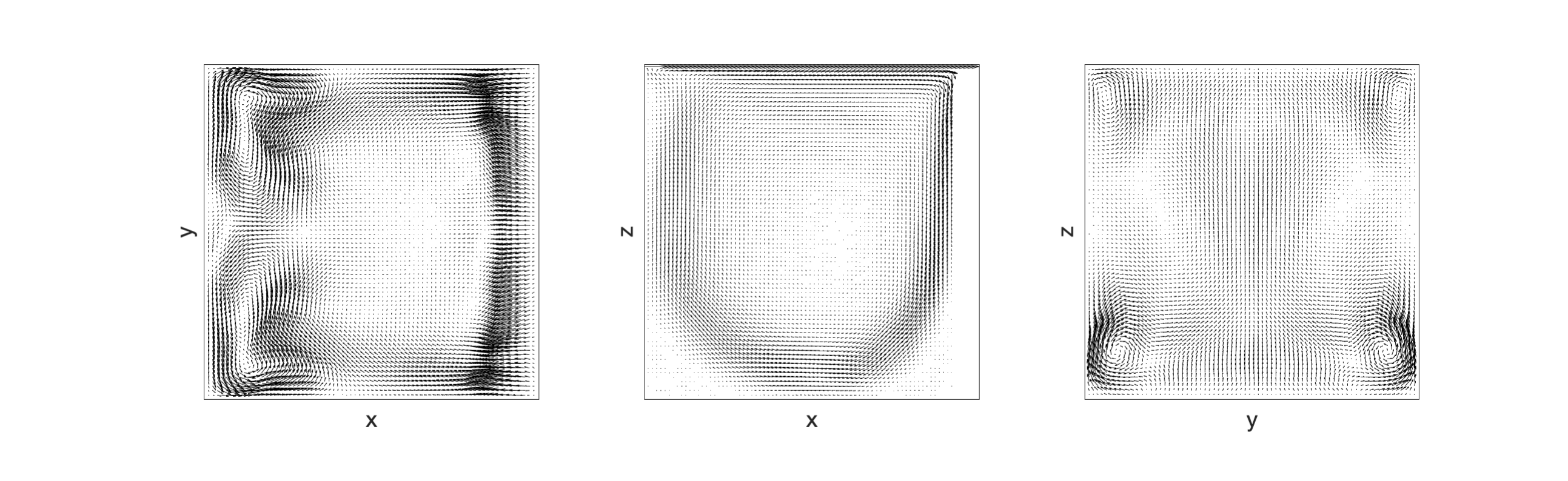}\\
$Re=3100$\\
\includegraphics[width = \textwidth, height=.3\textwidth,viewport=50 0 1150 350, clip]{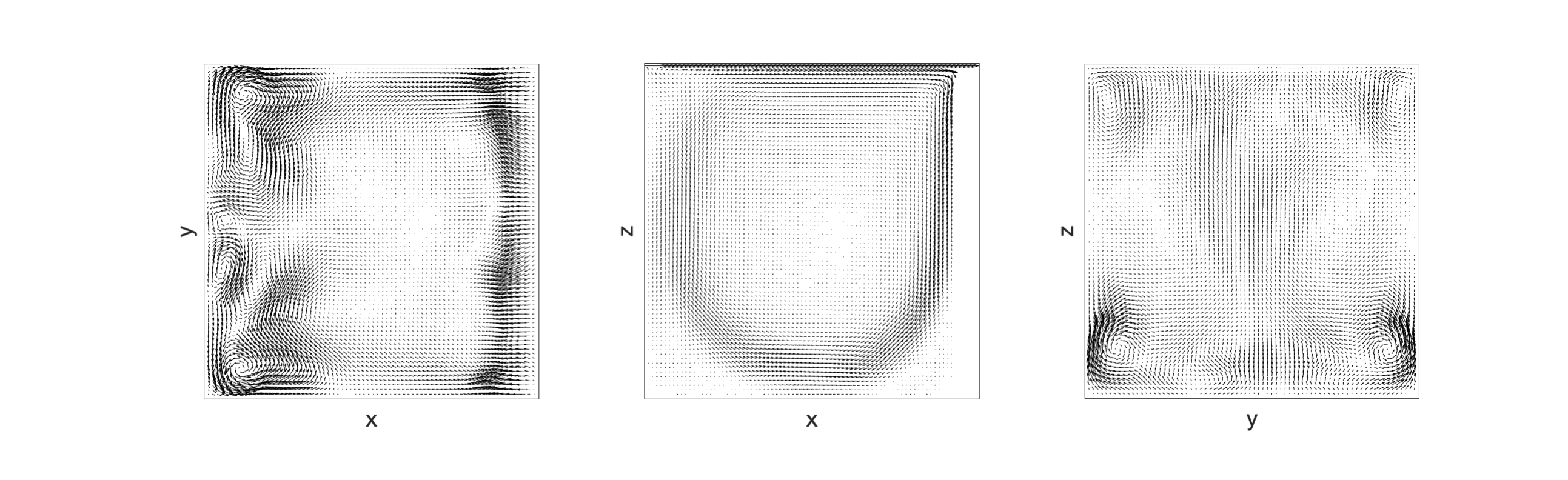}
\caption{Shown above are $Re=2500$ and $3100$ solutions, as midsliceplanes of the velocity fields.}
\label{nseplots2} 
\end{center}
\end{figure}

In the paper \cite{PRX18}, AA applied to \eqref{p1}-\eqref{p3} (referred to here as
AAPicard) was studied both theoretically and numerically.
Under a small data condition that implies the underlying fixed-point iteration is
contractive, it was shown that the method converges, 
and that the linear convergence rate is improved by AA. 
It is remarked however that the techniques used in that analysis and the 
coefficients in front of the higher order terms differ significantly from those shown 
here.

For the current test problem, as shown in \cite{PRX18}, with an initial guess of $u_0=0$,
the Picard method does not converge when $Re=400$.
Hence, for $Re\ge 400$, Picard iterations for steady solutions are not 
globally contractive. In fact, AAPicard with $m=1$ fails as well, although convergence
is attained with depths $m =2,3,4$ as demonstrated in \cite{PRX18}.
To show the effectiveness of AAPicard, considerably higher Reynolds numbers are considered
here: results are presented for $Re=2500$ and $Re=3100$, far beyond 
the range where the Picard iteration is contractive; and moreover, well past the 
first Hopf bifurcation  \cite{CPHGG16,FG10}.
Thus the method is converging to steady solutions in a time dependent regime, 
which from a mathematical point of view is interesting in itself. As discussed 
in \cite{ABHHMS06}, such solutions can serve as base-flow solutions in instability 
studies and flow control.

The $Re=2500$ tests show different choices of the depth $m$, including the largest 
possible ($m_k=k-1$), with no relaxation ($\beta_k=\beta=1$). 
Results are shown in Figure \ref{nseplots}.  
For $m\le 50$, convergence is not achieved (nor is it close to being achieved)
after 500 iterations. 
For $m=100,150$, and $m=k-1$, the method does converge. It appears that the stability 
of the NSE Picard iteration \cite{GR86} bounds the magnitude of any residual, 
and the improved analysis herein shows that higher order terms are all scaled by the 
latest residual, which together allows the method to benefit from the small gain factor
$\theta_k$ that comes from a greater algorithmic depth $m$ 
($m\ge 100$ creates gain factors $\theta_k$ that get to 0.25 and below).  
Notably, choosing $m$ as large as possible, $m_k=k-1$, 
gives the fastest convergence.

Finally, a combination of AAPicard with Newton ({\em cf}. \cite{FKR18}), was tested. 
The Newton iteration differs from the Picard in that the term 
$(u_{k+1}-u_k)\cdot\nabla u_k$ is added to the left side of \eqref{p1}.
Additionally, a line search was used in the Newton iterations.
The results shown used $m_k=k-1$ for the initial AAPicard iterations and switched to 
Newton once the nonlinear residual reached a sufficiently low tolerance.  
For a $H^1_0$-norm tolerance of $1$, the method failed to converge. For tolerances of 
$0.1$ and $0.01$, the method converged, and much faster than AAPicard on its own 
(see the top plot in Figure \ref{nseplots}).

With this technique, the solver attained convergence up to  $Re=3100$ 
(using AAPicard with $m_k=k-1$ and $\beta_k = \beta = 0.5$, 
up to a residual tolerance of $0.03$, then switching to Newton with a line search). 
With this method, 217 iterations were needed to converge to a tolerance of $10^{-8}$ in 
the $H^1_0$-norm.  With a continuation method that improves the initial guess, solutions at even 
higher $Re$ can be obtained.  Plots of the $Re=2500$ and $3100$ solutions are shown in 
Figure \ref{nseplots2} as midsliceplanes of the velocity fields.

\section{Conclusion}
The presented one-step analysis of Anderson acceleration sharpens the 
previously developed residual bounds for contractive operators and 
extends them to a class of potentially noncontractive operators which are 
important for the the approximation of solutions to nonlinear PDEs.  
The new analysis shows how the relative
scaling of the higher-order terms increases as the solution to the 
underlying optimization problem improves.  
Understanding the balance of the higher and lower order terms in the residual
expansion is instrumental in the design of robust and efficient algorithms
for challenging nonlinear problems.  
The current theory assumes that the latest difference between consecutive
residuals sufficiently changes the span of the previous differences, up to the 
given algorithmic depth. An efficient safeguarding strategy to ensure this assumption
holds is introduced and demonstrated, advancing the connection between theory 
and practice in a sense not accomplished with the usual assumption that the optimization 
coefficients are bounded.
Practical advantages based on the present advances in theory are
demonstrated in the numerical
section where Anderson acceleration is used to attain results for the nonlinear 
Helmholtz equation and 3D steady Navier-Stokes past the first Hopf bifurcation
which cannot be attained by the usual combinations of Picard iterations,
Newton iterations and relaxation techniques alone.

\section*{Acknowledgements}
SP is partially supported by NSF DMS 1852876 and 2011519.
LR is partially supported by NSF DMS 1522191 and 2011490.
The authors would like to thank the anonymous referees for suggesting additional 
clarification on the connection between the theory and the examples, and on the value 
of assuming \eqref{eqn:gmcond} instead the boundedness of the optimization coefficients.

\bibliographystyle{abbrv}
\bibliography{graddiv}

\appendix
\section*{Appendix}
The proof of the technical Lemma \ref{lem:invR} follows.

\begin{proof}
The proof follows by induction on the submatrix formed by the first $p$ rows and 
columns of $R$, then by induction indexing up the entries of the right-most column.
Let $R_p = R(1:p,1:p)$, the upper-left $p \times p$ block of $\hat R$, 
with inverse $S_p$.

The off-diagonal entries $r_{ij}$ of $\hat R$ are given by 
$r_{ij} = (q_i,a_j) = \nr{a_j}\cos(q_i,a_j)$, 
and by Proposition \ref{prop:diag-entries} the diagonal entries are given 
by $r_{ii} = \nr{a_i}|\sin(a_i, \cA_{i-1})|$, following the convention that
the columns of $\hat Q$ are chosen so the $r_{ii}$ are positive.

For the trivial case of $p=1$, $R_1 = r_{11}$, and
$s_{11} = 1/r_{11}= 1/\nr{a_1}$.
By Proposition \ref{prop:tridiag-inv}, to compute the inverse of $R_2$
it remains to compute $s_{22}$ and $s_{12}$. 
It is useful here to state the inversion formula for entries of the right-most
column (index $p$) as
\begin{align}\label{eqn:inv001}
s_{pp} = \f{1}{r_{pp}}, ~\text{ and }~
s_{kp} = -\f{1}{r_{kk}} \sum_{j = 1}^{p-k}r_{k,k+j}s_{k+j,p}
       = -\f{1}{r_{kk}} \sum_{j = 1}^{p-k}\nr{a_{k+j}}\cos(q_k,a_{k+j})s_{k+j,p},
~ k < p.
\end{align}
For $p=2$, the inversion formula \eqref{eqn:inv001} and expression
\eqref{eqn:diag-entries} for the diagonal entries yield
$s_{22} = 1/r_{22}= 1/(\nr{a_2}|\sin(a_2,q_1)|)$.
Then by the hypotheses of the lemma,
$s_{22} \le 1/(\nr{a_2}c_s)$. 
Using \eqref{eqn:inv001}, the off-diagonal entry then satisfies
$s_{12} = -\nr{a_2} \cos(q_1,a_2)s_{22}/r_{11}$, yielding 
$|s_{12}| \le c_t/(\nr{a_1}c_s)$.  Hence for $p=2$ the result holds.  Continue
by induction on $p$, assuming the result holds for $q = 1, \ldots, p-1$. Then for
$q=p$,
\[
R_p = \left(\begin{array}{cccccc}R_{p-1} & r_{1p}\\ & \vdots \\0 & r_{pp} 
\end{array}\right).
\]
By \eqref{eqn:inv001}, Proposition \ref{prop:diag-entries} and the 
hypotheses of the lemma, $\nr{s_{pp}} \le 1/(\nr{a_p}c_s)$. 

Similarly by \eqref{eqn:inv001}, $\nr{s_{p-1,p}} \le c_t/(\nr{a_{p-1}}c_s^2).$
This satisfies the 
base step on the inner induction, up row $p$ of $S_p$. Assuming the bound 
of \eqref{eqn:invR} for $s_{ip}$ holds for $i = p-1$ down to $i=k+1$, it suffices
to show the result for $i=k$. By \eqref{eqn:inv001} and the inductive hypothesis,
\begin{align*}
|s_{kp}| &= 
\left| \f{1}{r_{kk}} \sum_{j = 1}^{p-k}\nr{a_{k+j}}\cos(q_k,a_{k+j})s_{k+j,p} \right|
\le 
\f{1}{r_{kk}} \left( \sum_{j = 1}^{p-k-1} 
\f{c_t^2(c_t+c_s)^{p - (k+j) -1}}{c_s^{p-(k+j)+1}} 
+ \f {c_t} {c_s} \right).
\end{align*}
Setting $n = p-k$ 
\begin{align}\label{eqn:inv003}
|s_{kp}| &\le 
\f{1}{r_{kk}} \left( \sum_{j = 1}^{n-1} \f{c_t^2(c_t+c_s)^{n-j -1}}{c_s^{n-j+1}} 
+ \f {c_t} {c_s} \right)
= \f{c_t}{c_s^n r_{kk}} 
\left( \sum_{j = 1}^{n-1} c_t(1+c_s)^{n-j -1} c_s^{j-1} + c_s^{n-1}
   \right).
\end{align}
Rearranging the terms in the sum shows
\begin{align}\label{eqn:inv004}
&\sum_{j = 1}^{n-1} c_t(c_t+c_s)^{n-j -1} c_s^{j-1} + c_s^{n-1} 
\nonumber \\
& = \sum_{j = 1}^{n-2} c_t(c_t+c_s)^{n-j -1} c_s^{j-1} + (c_t+ c_s)c_s^{n-2}
\nonumber \\ & = 
\sum_{j = 1}^{n-3} c_t(c_t+c_s)^{n-j -1} c_s^{j-1} + (c_t+c_s)^2c_s^{n-3}
\nonumber \\ & \vdots \nonumber \\ & = 
c_t(c_t+c_s)^{n-2} + c_t(c_t+c_s)^{n-3}c_s + (c_t+c_s)^{n-3}c_s^2
\nonumber \\ & = c_t(c_t+c_s)^{n-2} + (c_t+c_s)^{n-2} c_s 
\nonumber \\ &= (c_t+c_s)^{n-1}.
\end{align}
Applying \eqref{eqn:inv004} and \eqref{eqn:diag-entries} to \eqref{eqn:inv003}
allows
\begin{align*}
|s_{1p}| \le \f{c_t(c_t+c_s)^{p-2}}{\nr{a_1}c_s^{p-1}}, ~\text{ and }~
|s_{kp}| \le \f{c_t(c_t+c_s)^{p-k-1}}{\nr{a_k}c_s^{p-k+1}}, ~ k = 2, \ldots, p-1,
\end{align*}
which completes the inductive step on $k$ and hence on $p$, and establishes the result.
\end{proof}

\vfill

\end{document}